\newtheorem{thm}{Theorem}[section]
\newtheorem{lem}[thm]{Lemma}
\newtheorem{pro}[thm]{Proposition}
\newtheorem{cor}[thm]{Corollary}
\newtheorem{remark}[thm]{Remark}
\newcommand{\abs}[1]{\lvert#1\rvert}
\DeclareMathOperator{\sing}{sing}
\DeclareMathOperator{\Div}{div}
\DeclareMathOperator{\dist}{dist}
\DeclareMathOperator{\spt}{spt}
\newcommand{\Sp}{\mathbb{S}}
\newcommand{\N}{\mathbb{N}}
\newcommand{\E}{\mathcal{E}}
\def\R{\mathbb R}
\def\pa{\partial}
\title[]{Lower bound for the perimeter density at singular points of a minimizing cluster in $\R^N$}
\author{Jonas Hirsch}
\address{Scuola Internazionale Superiore di Studi Avanzati, via Bonomea 265, 34136 Trieste, ITALY}
\email{jhirsch@sissa.it}
\author{Michele Marini}
\address{Scuola Internazionale Superiore di Studi Avanzati, via Bonomea 265, 34136 Trieste, ITALY}
\email{mmarini@sissa.it}
\keywords{Isoperimetric problems, partitioning problems, minimal surfaces}
\begin{document}

\maketitle

\begin{abstract}
In this paper we study the blow-ups of the singular points in the boundary of a minimizing cluster lying in the interface of more than two chambers. We establish a sharp lower bound for the perimeter density at those points and we prove that this bound is rigid, namely having the lowest possible density completely characterizes the blow-up.
\end{abstract}

\section{Introduction}

This paper deals with the study of the singularities of an {\it isoperimetric cluster}, that is a finite and disjoint family of sets of finite perimeter, called chambers, which minimizes the sum of the perimeter of all the chambers (Section \ref{sec.prel} contains all the precise definitions).\\

If the cluster has only one chamber, it is well known that the ball is the isoperimetric set (with volume constraint).
As the number of chambers increases, the problem becomes slightly more difficult: while existence and regularity of isoperimetric clusters is nowadays classical--see \cite{Al}, we also refer to \cite{Ma} and the references therein for a complete overview on the subject--very few is still known about the shape of those minimizing clusters.
A complete characterization of them is obtained only in the case when there are two chambers (see \cite{FABHZ,HMRR,HLR, Re}), or, in the plane, when there are three chambers (see \cite{Wi}).
\par
Since it seems to be very difficult to obtain such a "global" description of the isoperimetric clusters with an arbitrary number of chambers, a crucial role is played by a "local" analysis of their boundary. \\

In \cite{Al} it is proved, in analogy to the case when there is only one chamber, that the reduced boundary of the cluster (we refer again to Section \ref{sec.prel} for the definition) is a finite union of  analytic hypersurfaces with constant mean curvature, while, as we shall explain, the problem of the description of the singular part of the boundary of the cluster is very interesting and almost completely open.
\par
Just to make an example of the peculiar behavior of the singular points when dealing with clusters, let us notice that, while minimal surfaces can develop singularities only in dimension $N\ge 8$, in the case of minimizing clusters, any point in the interface of three or more different chambers is a singular point in every dimension.\\ 
As we shall see in the following sections these kind of singularities produced by the junction of three or more chambers will play a crucial role in our analysis. We define
\[
c\mathrm{-sing}(\mathcal E)=\left\{\text{points lying in the boundary of at least three chambers of }\mathcal E\right\}.
\]
We will concentrate our analysis to theese points.\\

Given a minimizing cluster $\mathcal E$ and the singular point $p\in\partial\mathcal E$, the monotonicity formula \eqref{monoton} introduced in Section \ref{sec.prel}, ensures that the blow-up $\lim_{r\to 0}r^{-1}\left(\mathcal E-p\right)$ is still minimizing and that it is a cone. The study of those blow-ups, and thence of cone-like minimising clusters, is then a natural tool in order to get a local description of the boundary of the cluster around a singular point.
In dimension $N=2$, there is only one (up to rotations) possible blow-up of the boundary of the cluster around a singular point. This blow-up consists of three half-line meeting at the origin and forming three 120-degrees angles. Throughout this paper we will denote this set simply by $Y$.
In $\R^3$, a remarkable result in \cite{Ta} provides us with a complete characterization of all the possible blow-ups around singular points asserting that there are only two possible cases: the set $Y\times\R$ and the cone generated by the centre of a regular tetrahedron and his edges.
\par
The singular set of minimal surfaces $L^\infty$-close to $Y\times \R^{N-2}$, or to a tetrahedron with an $N-3$ spine had been investigated in the seminal works  \cite{SimonCone} and by \cite{ColSpol}.
\par
To our knowledge, no characterization of cones is settled in dimension greater than three.\\

The description of the possible blow-ups deeply relies on the ambient dimension $N$. Nevertheless we shall prove a dimension-free lower bound for the perimeter density $\Theta_p(\mathcal E)$ defined as the perimeter of $\mathcal E$ inside the ball of radius $r$ divided by the volume of the $N-1$-dimensional ball of the same radius.
More precisely in Section \ref{sec.ldb} we prove the following theorem:

\begin{thm}\label{thm.lowdens}
Let $N\ge 2$, and let $\mathcal E$ be a cone-like minimizing cluster with at least three chambers. Then 
\[
\Theta_0(\mathcal E)\ge \frac 3 2.
\]
\end{thm}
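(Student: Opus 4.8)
The plan is to reduce the problem to the two-dimensional situation by slicing, exploiting the fact that a cone-like cluster with at least three chambers must meet \emph{every} sphere centered at the origin in a cluster of its own. Concretely, write $\Sigma = \partial\mathcal E \cap \Sp^{N-1}$ for the "link" of the cone on the unit sphere, and note that the cone structure forces $\partial\mathcal E \cap B_r = \{t\omega : 0<t\le r,\ \omega\in\Sigma\}$. A coarea/integration argument then gives $\mathcal H^{N-1}(\partial\mathcal E\cap B_r) = \frac{r^{N-1}}{N-1}\,\mathcal H^{N-2}(\Sigma)$, so that $\Theta_0(\mathcal E) = \mathcal H^{N-2}(\Sigma)/\big((N-1)\omega_{N-1}\big)$ where $\omega_{N-1}$ is the volume of the unit $(N-1)$-ball; in particular the density is exactly the normalized $(N-2)$-measure of the link. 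The inequality $\Theta_0(\mathcal E)\ge \tfrac32$ is therefore equivalent to a lower bound $\mathcal H^{N-2}(\Sigma)\ge \tfrac32(N-1)\omega_{N-1}$ on the perimeter of the spherical cluster $\Sigma$.

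Next I would bring in minimality. Since the blow-up cone is area-minimizing (among clusters) in $\R^N$, a comparison argument shows that $\Sigma$, viewed as a partition of $\Sp^{N-1}$, is itself stationary and in fact minimizing for a suitable "spherical" perimeter functional; more importantly, at a point $\omega\in \Sigma$ lying on the interface of three chambers, the blow-up of $\Sigma$ is again a cone with at least three chambers, so one can run an induction on the dimension $N$. The base case $N=2$ is the statement that the only singular blow-up is $Y$ (three half-lines at $120^\circ$), which has three radii and so $\mathcal H^0 = 3$ points on $\Sp^1$ restricted to a half... — more precisely, for $N=2$ the link of $Y$ consists of $3$ points, $\mathcal H^{N-2}(\Sigma)=\mathcal H^0(\Sigma)=3$, while $(N-1)\omega_{N-1} = 1\cdot 2 = 2$, giving $\Theta_0 = 3/2$ exactly; this both starts the induction and shows sharpness.

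For the inductive step the key geometric input is a lower bound on the \emph{length} (the $(N-2)$-measure) of the interface network $\Sigma$ on $\Sp^{N-1}$ coming from the density lower bound in one dimension less. The natural mechanism is: foliate $\Sp^{N-1}$ by a family of $(N-2)$-spheres (e.g. level sets of one coordinate, or geodesic spheres around a fixed point), apply the inductive density bound on the slices where $\Sigma$ meets the $c$-singular set, and integrate. Equivalently, one can use the monotonicity of $\Theta_\rho(\mathcal E)$ in $\rho$ together with the fact that along the cone $\Theta$ is constant, combined with a slicing of the cone by parallel hyperplanes to compare its density with that of the (lower-dimensional) cluster obtained in a generic slice through the spine.

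The main obstacle I anticipate is making the slicing argument \emph{quantitatively sharp} rather than merely qualitative: a crude slice-and-integrate estimate will typically lose a constant and only give $\Theta_0\ge 1$, which is the trivial bound coming from the regular part. To reach exactly $\tfrac32$ one must use that near a $c$-singular point the three sheets genuinely contribute, so the right approach is probably \emph{not} naive slicing but rather a calibration-type lower bound: construct a vector field on $B_r$ (or a divergence-form competitor) whose flux through $\partial\mathcal E$ is controlled below by $\tfrac32\cdot\mathcal H^{N-1}$ of each coordinate hyperplane's portion — essentially mimicking, in all dimensions at once, the elementary planar computation that three half-lines at $120^\circ$ beat two half-lines. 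Handling the interaction of the three (or more) chambers in the flux computation, and verifying the divergence constraint globally on the cone, is where the real work lies; the rigidity statement (equality forces the blow-up to be $Y\times\R^{N-2}$) should then drop out from the equality case in that flux inequality, since equality propagates the planar $120^\circ$ configuration along the whole spine.
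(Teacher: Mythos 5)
Your setup---the induction on $N$, the identity $\Theta_0(\mathcal E)=\mathcal H^{N-2}(\Sigma)/\big((N-1)\omega_{N-1}\big)$ for the link $\Sigma=\partial\mathcal E\cap\Sp^{N-1}$, and the base case $N=2$---agrees with the paper, but the inductive step as written has two gaps. The first is that you take for granted the existence of a point of $\Sp^{N-1}$ lying on the interface of at least three chambers, when you write ``at a point $\omega\in\Sigma$ lying on the interface of three chambers.'' This is Lemma \ref{lem.csingsphere} of the paper and it is not obvious: its proof needs a connectedness property of $\partial\mathcal E\cap\Sp^{N-1}$ (Lemma \ref{lem.connected}), which in turn rests on a local version of Frankel's theorem (no two minimal hypersurfaces in a manifold of positive Ricci curvature can realize a positive local distance) combined with a barrier-type regularity statement (Lemma \ref{lem.regular touching points}). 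Without this input you have no foothold to restart the induction in one dimension lower.

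The second gap is where the quantitative content lies, and you correctly sense it but resolve it in the wrong direction. You diagnose, correctly, that slicing the original cone by parallel hyperplanes and applying the inductive bound slice-by-slice loses a constant and only gives $\Theta_0\ge 1$, and you then propose, without construction, a calibration-type flux inequality as a remedy. The paper uses no calibration. Instead, once a $c$-singular point $\omega\in\Sp^{N-1}$ has been located, it blows up $\mathcal E$ \emph{at} $\omega$. Because $\mathcal E$ is already a cone with vertex $0$, this blow-up is translation-invariant along $\omega$ and hence splits as $\mathcal E'\times\R$ for some minimizing cone-like $\mathcal E'\subset\R^{N-1}$ that is still $c$-singular at $0$; monotonicity together with the cone structure gives $\Theta_0(\mathcal E)\ge\Theta_\omega(\mathcal E)=\Theta_0(\mathcal E'\times\R)$. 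Now slicing the \emph{cylinder} $\mathcal E'\times\R$ (not the original cone) by $\{x_N=t\}$ and using that $\mathcal E'$ is itself a cone, so that $\mathcal H^{N-2}(\partial^*\mathcal E'\cap B_{\sqrt{1-t^2}})=(1-t^2)^{(N-2)/2}\mathcal H^{N-2}(\partial^*\mathcal E'\cap B_1)$, gives via coarea and Fubini the sharp inequality $\Theta_0(\mathcal E'\times\R)\ge\Theta_0(\mathcal E')$ with no constant lost, after which the inductive hypothesis closes the argument. So the escape from the slicing loss you were worried about is not a calibration but the intermediate blow-up: slicing is exact for the cylindrical blow-up even though it is lossy for the cone itself.
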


Theorem \ref{thm.lowdens} can be rephrased as follows: $Y\times \R^{N-2}$ has the lowest possible perimeter density at the origin, among all minimizing cone-like clusters of $\R^N$, with at least three chambers.
As we shall see in Section \ref{sec.consequences} the above statement is rigid, in particular, in Corollary \ref{cor.characterizationbydensity} we show that any other cone-like minimizing cluster has density strictly greater than $3/2$.\\

To prove Theorem \ref{thm.lowdens} we use an induction argument on the dimension in order to reduce to the case $N=2$, where the conclusion easily follows by the above mentioned characterization of the minimizing cone-like clusters. To prove the inductive step we show the following:
\begin{itemize}
\item[i)] there always exist singular points in the unit sphere where more than two chamber meets,\\
 \item[ii)]the blow-up at those points is a cone of the form $\mathcal E'\times \R$, with $\mathcal E'\subset\R^{N-1}$ with density lower than $\Theta_0(\mathcal E)$.
\end{itemize}

The proof of item i) requires some preliminary results introduced in Section \ref{sec.connected}.\\

In Section \ref{sec.consequences}, we consider a converging sequence of minimizing clusters $\mathcal E_k\rightarrow \mathcal E$ and we show, as an application of Theorem \ref{thm.lowdens}, that $c\mathrm{-sing}(\mathcal E_k)$ converges in Hausdorff to $c\mathrm{-sing}(\mathcal E)$ (see Proposition \ref{prop.sequential}).
It is relatively easy to show that a sequence of $c$-singular points must converge to a $c$-singular point of the limit cluster, but it is more difficult to show that every $c$-singularity cannot appear just in the limit.
To show this, we reduce to the case $N=2$ and we prove in Lemma \ref{lem.Ynonremovable}, that every cluster $\mathcal E\subset \R^2$ sufficiently close to $Y$ must have a $c$-singular point in a neighborhood of the origin.

\section{Preliminaries and basic definitions}\label{sec.prel}
Let $m\in\mathbb N$ and let $\mathcal E=\left\{\mathcal E(i)\right\}_{i=1}^m$ be a family of subset of $\R^N$. We say that $\mathcal E$ is a $m$-{\it cluster} (or simply a {\it cluster}), provided each $\mathcal E(i)$ is a set of locally finite perimeter and $|\mathcal E(i)\cap\mathcal E(j)|=0$, for every $i\neq j$, where $|\cdot|$ denotes the $N$-dimensional Lebesgue measure. We call the sets $\mathcal E(i)$ the {\it chambers} of the cluster $\mathcal E$, and we denote by $\mathcal E(0)$ the {\it exterior chamber} $\R^N\setminus \overset{m}{\underset{i=1}{\bigcup}}\mathcal E(i)$ .\\
Given a $m$-cluster $\mathcal E$ and $i\neq j\in\{0,\ldots,m\}$ we denote by $\mathcal E(i,j)$ the {\it interface}  of the chambers $\mathcal E(i)$ and $\mathcal E(j)$ defined as the intersection of their reduced boundaries, namely $\mathcal E(i,j)=\partial^*\mathcal E(i)\cap\partial^*\mathcal E(j)$. 
Given an open set $A$, we finally define the {\it relative perimeter} of $\mathcal E$ in $A$ as
\[
P(\mathcal E;A)={\underset{0\le i<j\le m}{\sum}}\mathcal H^{N-1}\left(\mathcal E(i,j)\cap A\right),
\]
where $\mathcal H^{N-1}$ denotes the $N-1$-dimensional Hausdorff measure. By virtue of \cite[Proposition 29.4]{Ma} the above definition is equivalent to
\begin{equation}\label{eq:1/2perimeter}
P(\mathcal E;A)=\frac 1 2 \sum_{i=0}^m P(\mathcal E(i);A).
\end{equation}
Here we denoted by $P(E;A)$ the relative perimeter of a set $E$ of locally finite perimeter in $A$.\\

Let $\mathcal E$ and $\mathcal F$ be two $m$-clusters and let $\mathrm{d}(\mathcal E,\mathcal F)=\sum_{i=1}^m|\mathcal E(i)\Delta\mathcal F(i)|$ be the sum of the volumes of the symmetric differences between the chambers of $\mathcal E$ and the chambers of $\mathcal F$.
Given $\Lambda,\,\rho>0$, we say that $\mathcal E$ is a $(\Lambda, \rho)$-{\it minimizing cluster} in $A$ if 
\[
P(\mathcal E;B_r(x))\le P(\mathcal F; B_r(x))+\Lambda \mathrm{d}(\mathcal E,\mathcal F),
\]
holds true for every $r<\rho$ and every $x\in\R^N$ such that $\mathcal E(i)\Delta \mathcal F(i)\subset\subset B_r(x)$.\\
We stress that, if a cluster $\mathcal E$ is a minimizer of the perimeter functional among all clusters of given volume, namely
\[
P(\mathcal E)=\inf \left\{P(\mathcal F)\,:\,|\mathcal F(i)|=|\mathcal E(i)|\right\},
\]
then $\mathcal E$ is a $(\Lambda,\rho)$-minimizing cluster, for some positive numbers $\Lambda$ and $\rho$ (see for instance \cite[Chapter 29]{Ma}).\\

If $\mathcal E$ is a $(\Lambda,\rho)$-minimizing cluster, then its reduced boundary is a $C^{1,\alpha}$-hypersurface, for every $\alpha\in(0,1)$ (see \cite[Corollary 4.6]{CLM}); however, the set
\[
\partial \mathcal E:=\mathrm{cl}\left(\overset{N}{\underset{i=1}{\bigcup}}\partial^*\mathcal E(i)\right)=\overset{N}{\underset{i=1}{\bigcup}}\left\{x\,:\,0<|B_r(x)\cap\mathcal E(i)|<r^N\omega_N, \ \  r>0\right\}
\]
can develop singularities.\\
We define the $c$-{\it singular set} as the subset of $\partial E\setminus \overset{N}{\underset{i=1}{\bigcup}}\partial^*\mathcal E(i)$ consisting of those singularities arising in the junction of three or more chambers, more precisely we set
\[
c\mathrm{-sing}(\mathcal E)=\{x\in\partial\mathcal E\,:\, \# I(x)\ge 3\},
\]
where
\begin{equation}\label{closechambers}
I(x)=\left\{i\in\{0,\ldots,m\}\,:\, 0<\liminf_{r\to 0}|B_r(x)\cap\mathcal E(i)|r^{-N}\right\}.
\end{equation}

For the sake of completeness we state here below a result that allows us to reformulate the definition of the $c$-singular set and we refer to \cite{CLM} and \cite[Chapter 30]{Ma} for a proof.\\

\begin{lem}[Infiltration Lemma]\label{infiltrationlem}
 Let $\mathcal E$ be a $(\Lambda,\rho)$-minimizing cluster, then there exist constants $\varepsilon$ and $r_0$ depending only on $\Lambda$, $\rho$ and $N$ such that if
\[
|\mathcal E(i)\cap B_r(x)|<\varepsilon r^N\omega_N,
\]
for some $x\in\R^N$, $i=0,\ldots,m$ and $r<r_0$, then
\[
|\mathcal E(i)\cap B_{r/2}(x)|=0.
\]
\end{lem}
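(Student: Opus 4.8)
The plan is to control the single–variable function $m(s):=\abs{\mathcal E(i)\cap B_s(x)}$ and reduce the statement to a first–order differential inequality for $m$. After a translation we may assume $x=0$ and write $B_s=B_s(0)$. Since $m(s)=\int_0^s\mathcal H^{N-1}(\mathcal E(i)^{(1)}\cap\partial B_t)\,dt$, the function $m$ is absolutely continuous and nondecreasing, with $m'(s)=\mathcal H^{N-1}(\mathcal E(i)^{(1)}\cap\partial B_s)$ for a.e.\ $s$; moreover for a.e.\ $s$ one has $\mathcal H^{N-1}(\partial^*\mathcal E(i)\cap\partial B_s)=0$, whence $\partial^*(\mathcal E(i)\cap B_s)=(\partial^*\mathcal E(i)\cap B_s)\cup(\mathcal E(i)^{(1)}\cap\partial B_s)$ up to $\mathcal H^{N-1}$–negligible sets. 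Call such a radius $s\in(0,r)$ \emph{admissible}; as admissible radii have full measure in $(0,r)$, it suffices to produce a pointwise differential inequality for admissible $s$.

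Fix an admissible $s$ and choose an index $k\ne i$ maximizing $\mathcal H^{N-1}(\mathcal E(i,k)\cap B_s)$. Define the competitor $\mathcal F$ by $\mathcal F(i)=\mathcal E(i)\setminus B_s$, $\mathcal F(k)=\mathcal E(k)\cup(\mathcal E(i)\cap B_s)$, and $\mathcal F(j)=\mathcal E(j)$ for $j\ne i,k$: this is again a cluster, it coincides with $\mathcal E$ outside $\overline{B_s}$, and $\mathrm d(\mathcal E,\mathcal F)=2m(s)$. Using the submodularity $P(A\cup B)+P(A\cap B)\le P(A)+P(B)$, the decomposition $\partial^*\mathcal E(i)\cap B_s=\bigcup_{j\ne i}\mathcal E(i,j)\cap B_s$, the splitting of $\partial^*(\mathcal E(i)\cap B_s)$ above, and \eqref{eq:1/2perimeter}, a direct computation gives
\[
P(\mathcal F;B_s)\le P(\mathcal E;B_s)-\mathcal H^{N-1}(\mathcal E(i,k)\cap B_s)+m'(s).
\]
Since $\mathcal E$ is $(\Lambda,\rho)$–minimizing and $\overline{B_s}\subset\subset B_{s'}$ for $s<s'<\rho$, comparing $\mathcal E$ with $\mathcal F$ on $B_{s'}$ and using $P(\mathcal E;\partial B_s)=P(\mathcal F;\partial B_s)=0$ yields $P(\mathcal E;B_s)\le P(\mathcal F;B_s)+2\Lambda m(s)$, hence
\[
\mathcal H^{N-1}(\mathcal E(i,k)\cap B_s)\le m'(s)+2\Lambda m(s),
\]
and by the maximality of $k$, $\mathcal H^{N-1}(\partial^*\mathcal E(i)\cap B_s)\le m\,\big(m'(s)+2\Lambda m(s)\big)$.

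Now apply the isoperimetric inequality to the set $\mathcal E(i)\cap B_s$: for admissible $s$,
\[
m(s)^{\frac{N-1}{N}}\le C(N)\,P\big(\mathcal E(i)\cap B_s\big)=C(N)\big(\mathcal H^{N-1}(\partial^*\mathcal E(i)\cap B_s)+m'(s)\big)\le C_1\big(m'(s)+2\Lambda m(s)\big),
\]
with $C_1=C_1(N,m)$. From $m(s)\le m(r)<\varepsilon\omega_N r^N$ we get $2\Lambda m(s)\le 2\Lambda(\varepsilon\omega_N)^{1/N}r\,m(s)^{\frac{N-1}{N}}$, so if $\varepsilon$ and $r_0<\rho$ are chosen (depending only on $N,m,\Lambda,\rho$) so that $2\Lambda C_1(\varepsilon\omega_N)^{1/N}r_0\le\tfrac12$, then for $r<r_0$ the $m(s)$–term is absorbed and $m'(s)\ge c\,m(s)^{\frac{N-1}{N}}$ for a.e.\ $s\in(0,r)$, with $c=c(N,m,\Lambda)>0$. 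Hence $\big(m^{1/N}\big)'\ge c/N$ a.e.\ on any interval where $m>0$; if $m(r/2)>0$, integrating over $[r/2,r]$ gives $m(r)^{1/N}\ge \tfrac{cr}{2N}$, i.e.\ $m(r)\ge (cr/(2N))^N$, contradicting $m(r)<\varepsilon\omega_N r^N$ once $\varepsilon<c^N/((2N)^N\omega_N)$. Therefore $m(r/2)=0$, which is the assertion.

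The main obstacle is the competitor perimeter estimate: one must track which interfaces are destroyed (the common interface $\mathcal E(i,k)\cap B_s$, which enters with a factor $2$ because each interface is shared by two chambers), which are merely relabelled (the interfaces $\mathcal E(i,j)\cap B_s$ with $j\ne i,k$, absorbed into $\mathcal E(k,j)$), and which are created on $\partial B_s$, the latter controlled by $\mathcal H^{N-1}(\mathcal E(i)^{(1)}\cap\partial B_s)=m'(s)$. A more careful redistribution of $\mathcal E(i)\cap B_s$ among all adjacent chambers simultaneously makes the constants depend on $N,\Lambda,\rho$ only; we refer to \cite{CLM} and \cite[Chapter 30]{Ma} for the details.
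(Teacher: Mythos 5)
The paper does not prove Lemma \ref{infiltrationlem} itself---it defers to \cite{CLM} and \cite[Chapter 30]{Ma}---so there is no internal proof to compare against, but your proposal follows the standard argument from those references. The structure is sound: set $m(s)=|\mathcal E(i)\cap B_s(x)|$; for a.e.\ $s$ compare $\mathcal E$ with the competitor that reassigns $\mathcal E(i)\cap B_s$ to one adjacent chamber $k$, which creates at most $m'(s)$ of new interface on $\partial B_s$ and destroys $\mathcal H^{N-1}(\mathcal E(i,k)\cap B_s)$, giving $\mathcal H^{N-1}(\mathcal E(i,k)\cap B_s)\le m'(s)+2\Lambda m(s)$; combine with the Euclidean isoperimetric inequality for $\mathcal E(i)\cap B_s$ to get $m(s)^{(N-1)/N}\le C[m'(s)+\Lambda m(s)]$; absorb the lower-order term for $\varepsilon$ and $r_0$ small; integrate $(m^{1/N})'\ge c/N$ on $[r/2,r]$ and contradict $m(r)<\varepsilon\omega_N r^N$ if $m(r/2)>0$. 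The intermediate measure-theoretic steps (the decomposition of $\partial^*(\mathcal E(i)\cap B_s)$ on admissible radii, $\mathrm d(\mathcal E,\mathcal F)\le 2m(s)$, the absorption $m(s)\le(\varepsilon\omega_N)^{1/N}r\,m(s)^{(N-1)/N}$) are all correct. The one genuine gap, which you flag yourself, is that bounding $\mathcal H^{N-1}(\partial^*\mathcal E(i)\cap B_s)$ by $m$ times the largest single interface makes $C_1$, hence $\varepsilon$ and $r_0$, depend on the number of chambers $m$, whereas the statement claims dependence on $N,\Lambda,\rho$ only; removing that dependence requires a finer redistribution of $\mathcal E(i)\cap B_s$ among several chambers (or an a priori bound on the number of chambers with nontrivial density near a point) and is not automatic, so it cannot be entirely waved off---even though it is immaterial for the applications in this paper, where $m$ is fixed.
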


\begin{pro}\label{csingpro}
 Let $\mathcal E$ be a $(\Lambda,\rho)$-minimizing cluster, let $\varepsilon$ and $r_0$ be the constants defined after Lemma \ref{infiltrationlem}. Then $x\in c\mathrm{-sing}(\mathcal E)$ if and only if $x\in\partial E$ and
\begin{equation}\label{csing}
|\mathcal E(i)\cap B_r(x)|+|\mathcal E(j)\cap B_r(x)|\le(1-\varepsilon)r^N\omega_N,
\end{equation}\label{csingeq}
for every $i\neq j$ and every $r\le r_0$.
\end{pro}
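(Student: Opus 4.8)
The plan is to prove Proposition \ref{csingpro} by showing the two implications separately, in both cases using the Infiltration Lemma to convert the ``at least three chambers have positive density'' formulation of $c$-singularity into the quantitative measure-theoretic estimate \eqref{csing}.

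First I would prove that \eqref{csing} implies $x \in c\mathrm{-sing}(\mathcal{E})$. Assume $x \in \partial E$ and that \eqref{csing} holds for all $i \neq j$ and all $r \le r_0$. I want to show $\# I(x) \ge 3$. Suppose for contradiction that $\# I(x) \le 2$, say $I(x) \subseteq \{i,j\}$ for some (possibly equal, but the interesting case is two distinct) indices. Then for every chamber $k \notin \{i,j\}$ we have $\liminf_{r\to 0} |B_r(x) \cap \mathcal{E}(k)| r^{-N} = 0$, so along a sequence $r \to 0$ the density of $\mathcal{E}(k)$ drops below the infiltration threshold $\varepsilon$; by the Infiltration Lemma, $|\mathcal{E}(k) \cap B_{r/2}(x)| = 0$ for such $r$, and a standard covering/iteration argument (or simply continuity of $r \mapsto |\mathcal{E}(k)\cap B_r(x)|$ together with the fact that vanishing on arbitrarily small balls forces vanishing on a fixed ball via the clopen-type alternative behind infiltration) upgrades this to $|\mathcal{E}(k) \cap B_{r_0}(x)| = 0$ for all the finitely many $k \notin\{i,j\}$. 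Hence inside $B_{r_0}(x)$ the cluster consists only of chambers $i$ and $j$, which forces $|\mathcal{E}(i)\cap B_r(x)| + |\mathcal{E}(j)\cap B_r(x)| = r^N\omega_N$ for $r \le r_0$ — contradicting \eqref{csing} since $\varepsilon > 0$. Therefore $\# I(x) \ge 3$, i.e.\ $x \in c\mathrm{-sing}(\mathcal{E})$.

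Conversely, suppose $x \in c\mathrm{-sing}(\mathcal{E})$, so $x \in \partial E$ and $\# I(x) \ge 3$. Fix any pair $i \neq j$ and any $r \le r_0$; I must show \eqref{csing}. Since $\# I(x) \ge 3$, there is at least one index $k \in I(x) \setminus \{i,j\}$, meaning $\liminf_{r'\to 0}|B_{r'}(x)\cap\mathcal{E}(k)|(r')^{-N} > 0$. By the Infiltration Lemma (in contrapositive form), this positive lower density forces $|\mathcal{E}(k)\cap B_{r'}(x)| \ge \varepsilon (r')^N \omega_N$ for all $r' \le r_0$: indeed if the density of $\mathcal{E}(k)$ ever dropped below $\varepsilon$ at some scale $r' \le r_0$, it would vanish identically on $B_{r'/2}(x)$ and hence have zero lower density at $x$. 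Consequently, for every $r \le r_0$,
\[
|\mathcal{E}(i)\cap B_r(x)| + |\mathcal{E}(j)\cap B_r(x)| \le r^N\omega_N - |\mathcal{E}(k)\cap B_r(x)| \le (1-\varepsilon) r^N\omega_N,
\]
where the first inequality uses disjointness of the chambers. This is exactly \eqref{csing}, completing the proof.

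The main obstacle — and the point deserving the most care — is the step that turns pointwise infiltration (``$|\mathcal{E}(k)\cap B_r(x)| < \varepsilon r^N\omega_N$ for one radius implies vanishing on $B_{r/2}(x)$'') into the dichotomy ``$\mathcal{E}(k)$ either has density $\ge \varepsilon$ at $x$ at every scale $\le r_0$, or vanishes on a fixed ball around $x$.'' The cleanest way is to observe that if $|\mathcal{E}(k)\cap B_{\bar r}(x)| < \varepsilon \bar r^N\omega_N$ for some $\bar r \le r_0$, then $\mathcal{E}(k)$ vanishes on $B_{\bar r/2}(x)$, hence trivially has density $< \varepsilon$ on $B_{s}(x)$ for all $s \le \bar r/2$, and one can iterate (or simply note the liminf in \eqref{closechambers} is then $0$); the only thing to check carefully is that we have enough radii below $r_0$ available, which is automatic since we only ever apply the lemma at scales strictly less than $r_0$. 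One should also be slightly careful that the finitely many ``bad'' chambers can be handled simultaneously, but since $m$ is finite this is immediate.
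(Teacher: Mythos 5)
Your proof uses the same core tool as the paper's (the Infiltration Lemma converting between ``positive lower density'' and the quantitative $\varepsilon$-gap in \eqref{csing}), and both directions are essentially correct. Your argument for $x\in c\mathrm{-sing}(\mathcal E)\ \Rightarrow\ \eqref{csing}$ is in fact more direct than the paper's: you show any third chamber $k\in I(x)\setminus\{i,j\}$ occupies at least an $\varepsilon$-fraction of every ball $B_r(x)$ for $r\le r_0$ and then subtract, whereas the paper argues by contrapositive in one direction and chooses a blow-up sequence in the other; making the uniform density bound on the third chamber explicit is cleaner and more self-contained.

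However, one claim in your first direction is overreaching and false as stated: you assert that infiltration ``upgrades'' to $|\mathcal E(k)\cap B_{r_0}(x)|=0$ for each $k\notin\{i,j\}$, so that ``inside $B_{r_0}(x)$ the cluster consists only of chambers $i$ and $j$.'' Infiltration yields vanishing of $\mathcal E(k)$ only on a small ball $B_{\bar r_k/2}(x)$, where $\bar r_k<r_0$ is a scale at which the density of $\mathcal E(k)$ drops below $\varepsilon$; it propagates smallness inward to smaller balls, not outward, so no iteration or covering argument enlarges this to $B_{r_0}(x)$, and indeed a chamber may be absent near $x$ yet occupy positive volume at a fixed distance from $x$ inside $B_{r_0}(x)$. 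Fortunately the conclusion you actually need is much weaker: set $\bar r:=\min_{k\notin\{i,j\}}\bar r_k/2\le r_0$; then $|\mathcal E(i)\cap B_{\bar r}(x)|+|\mathcal E(j)\cap B_{\bar r}(x)|=\bar r^N\omega_N>(1-\varepsilon)\bar r^N\omega_N$, which already violates \eqref{csing} at the single scale $\bar r$. Replace the overstated $B_{r_0}$ claim with this and the argument is sound.
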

\begin{proof}
Suppose that $x$ is such that 
\[
|\mathcal E(i)\cap B_r(x)|+|\mathcal E(j)\cap B_r(x)|>(1-\varepsilon)r^N,
\]
for some $r<r_0$ and $i\neq j$; then, for every $h\neq i\neq j$, since the chambers are disjoint, one has $|\mathcal E(h)\cap B_r(x)|<\varepsilon r^N\omega_N$. By Lemma \ref{infiltrationlem} $|\mathcal E(h)\cap B_{r/2}(x)|=0$, and then $h\notin I(x)$. In other words $I(x)\subset\{i,j\}$, thus $x\notin c-{\mathrm sing}(\mathcal E)$.\\

We are left to show that if $x$ satisfies \eqref{csing}, then $x\in c-{\mathrm sing}(\mathcal E)$. Suppose that $I(x)=\{i,j\}$ and let $r_n$ be a sequence such that $r_n\to 0$ and $\lim_{n\to\infty}|B_{r_n}(x)\cap\mathcal E(h)|r_n^{-N}=0$, for every $h\neq i\neq j$.
Then
\[
\omega_N=\lim_{n\to\infty}\left|B_{r_n}(x)\cap \left(\bigcup\mathcal E(l)\right) \right|r_n^{-N}=\lim_{n\to\infty}\sum |B_{r_n}(x)\cap\mathcal E(l)|r_n^{-N}
\]
\[
=\lim_{n\to\infty}\left[|B_{r_n}(x)\cap\mathcal E(i)|r_n^{-N}+|B_{r_n}(x)\cap\mathcal E(j)|r_n^{-N}\right].
\]
Thus \eqref{csing} cannot be satisfied by the chambers $\mathcal E(i)$ and $E(j)$, provided $r$ is sufficiently small.
\end{proof}

We now introduce the {\it perimeter density} $\Theta_x(\mathcal E)$ as follows:
\[
\Theta_x(\mathcal E)=\lim_{r\to 0}\frac{P(\mathcal E;B_r(x))}{\omega_{N-1} r^{N-1}}
\]
Existence and finiteness of $\Theta_x(\mathcal E)$ for $(\Lambda, \rho)$-minimizing clusters is given by the so-called {\it monotonicity formula} stating that the quantity
\begin{equation}\label{monoton}
e^{\Lambda r}\frac{P(\mathcal E;B_r(x))}{\omega_{N-1} r^{N-1}}
\end{equation}
is increasing, for every $r<\rho$ and $x\in\partial \mathcal E$, see for instance \cite[Theorem 17.6]{SimonBook}\\
In particular we can infer from the monotonicity formula \eqref{monoton} the existence of blow-ups at every point $x\in\partial\mathcal E$, namely the sets $\mathcal E_{x,r}=(\mathcal E-x)/r$ converge, as $r\to 0$, in $L^1$ to a cluster $\mathcal E_{x,0}$ and $\pa\mathcal E_{x,r}$ converge in Hausdorff to the boundary of $\mathcal E_{x,0}$.  By investigation of the precise error term in the monotonicity formula, one obtains that the limit set is a conical cluster which minimizes the perimeter in a sense that will be made rigorous by the following definition. 
\par
We say that $\mathcal E$ is a {\it cone-like} $m$-cluster if, for $i=0,\ldots,m$, $\mathcal E(i)$ is an open cone with vertex at the origin of $\R^N$. A cone-like $m$-cluster $\mathcal E$ is a {\it minimizing cone-like cluster} provided 
\[
P(\mathcal E; B_r(x))\le P(\mathcal F; B_r(x)),
\]
for every $r>0$, every $x\in\R^N$ and every $m$-cluster $\mathcal F$ such that $\mathcal E(i)\Delta\mathcal F(i)\subset\subset B_r(x)$.

\section{Connectedness of the interfaces}\label{sec.connected}
The aim of this section is to proof the following lemma:
\begin{lem}\label{lem.connected}
Let $N>2$ and $\mathcal E$ be a minimizing cone-like cluster, then the set $\partial\mathcal E\cap\mathbb S^{N-1}$ has the following property: if $A$ and $B$ are non-empty sets such that $\partial\mathcal E\cap \mathbb S^{N-1}=A\cup B$, then $\mathrm{dist(A,B)=0}$.
\end{lem}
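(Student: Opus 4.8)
The statement asserts that $\partial\mathcal{E}\cap\mathbb{S}^{N-1}$ is "connected" in the weak sense that it cannot be split into two non-empty parts at positive distance; equivalently, it is connected as a topological subspace of the sphere (since $\partial\mathcal{E}$ is closed and contained in the compact set $\mathbb{S}^{N-1}$, and a compact set is disconnected iff it splits into two non-empty parts at positive distance). The natural strategy is \emph{by contradiction}: suppose $\partial\mathcal{E}\cap\mathbb{S}^{N-1}=A\cup B$ with $A,B$ non-empty, closed, and $\dist(A,B)=:2\delta>0$. I would then try to produce a competitor cluster $\mathcal{F}$ with strictly smaller perimeter in some ball, contradicting minimality.

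The first step is to record the conical structure. Because $\mathcal{E}$ is cone-like, $\partial\mathcal{E}$ is the cone over $\Sigma:=\partial\mathcal{E}\cap\mathbb{S}^{N-1}$, and near $\mathbb{S}^{N-1}$ the cluster looks like a product: in the annulus $1-\eta<|x|<1+\eta$ (for $\eta$ small, using the $C^{1,\alpha}$ regularity of the reduced boundary away from the $c$-singular set, which is itself a cone of lower dimension), the interfaces are graphs over cylinders $\{tx : x\in\Sigma,\, t\in(1-\eta,1+\eta)\}$. The separation hypothesis means $\Sigma$ lies in two caps $U_A\supset A$, $U_B\supset B$ of the sphere with $\dist(U_A,U_B)\geq\delta$. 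Consequently, inside a suitable ball $B_\rho(p)$ centred at a point $p\in\mathbb{S}^{N-1}$ lying in the "gap" — chosen so that $B_\rho(p)$ meets the cone over $A$ but is \emph{disjoint} from the cone over $B$, or vice versa — the cluster restricted there involves only the chambers adjacent to $A$.

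The heart of the argument is then a \emph{slicing / scaling} contradiction exploiting homogeneity. Consider the function $m(r)=P(\mathcal{E};B_r(p))$ along a ray, or better, consider cutting the cone along the sphere of radius $t$ near $1$. Since $\mathcal{E}$ is a minimizing \emph{cone}, the density ratio $P(\mathcal{E};B_r)/(\omega_{N-1}r^{N-1})$ is constant in $r$ (monotonicity formula with $\Lambda=0$, equality case forces conicality, which we already have). The idea is that if $\Sigma$ splits, one can "reconnect" or rather \emph{excise one component cheaply}: replace the cone over $A\cup B$ inside a ball by the cone over $A$ alone with the hole filled by a single chamber, or push $B$ out radially. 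Quantitatively: let $\mathcal{F}$ agree with $\mathcal{E}$ outside $B_1(0)$, and inside $B_1$ replace, on the cone over a small cap around $B$, the local cluster by filling it with whichever single chamber borders that cap from outside — this is possible precisely because the gap of width $\delta$ in $\Sigma$ means the interfaces emanating from $B$ do not interact with those from $A$ inside a thin solid wedge. The perimeter saved is of order $\mathcal{H}^{N-1}(\text{cone over } B\cap B_1)>0$ (the interfaces over $B$ disappear), while the perimeter added is only the "lateral" cost $\mathcal{H}^{N-1}$ of the new boundary along $\partial B_1$ in the wedge, which can be made arbitrarily small by first shrinking the cap around $B$ — but here one must be careful, because $B$ itself may be large in $\Sigma$; instead one rescales. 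The cleanest route: since $\mathcal{E}$ is a cone, for any $\lambda>0$ the rescaled cluster coincides with $\mathcal{E}$, so a competitor good at scale $1$ is equally good at every scale; hence it suffices to find \emph{one} ball, anywhere, of \emph{any} radius, in which removing the "$B$-part" strictly lowers the perimeter. Choosing that ball to be a tiny ball straddling a point of $B$ but far (distance $\geq\delta\cdot|x|$) from the cone over $A$: inside it the cluster is a minimizing cluster with a boundary that is, by the gap, an \emph{isolated} piece — in particular it contains an interface patch $\mathcal{E}(i,j)$ with $i,j\neq$ the dominant outer chamber, and by the infiltration lemma / the $c$-sing characterization (Proposition \ref{csingpro}) one derives that filling the ball by the outer chamber is admissible and strictly cheaper, since a genuinely $(N-1)$-dimensional interface is destroyed at no compensating cost along the small sphere once the radius is taken small relative to the local geometry. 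This yields the contradiction.

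I expect the \textbf{main obstacle} to be making precise the claim that a component $B$ separated from the rest can be "filled in" by a single chamber while remaining an admissible competitor — i.e., verifying that across the gap of width $\delta$ only \emph{one} chamber is present, so that the filling is well-defined and does not create new interfaces other than along the boundary sphere. This requires a clean statement that a connected region of $\mathbb{S}^{N-1}$ disjoint from $\partial\mathcal{E}\cap\mathbb{S}^{N-1}$ lies in a single chamber of the cone (a Lebesgue-density / connectedness argument for sets of finite perimeter, using that $\partial^*\mathcal{E}(i)\subset\partial\mathcal{E}$), together with a careful bookkeeping of which terms in $P(\mathcal{F};B_\rho)-P(\mathcal{E};B_\rho)$ survive. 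The rescaling remark is what lets one dodge any quantitative estimate on the \emph{size} of $B$: one only ever needs a single, arbitrarily small, well-placed ball, and conicality transports the saving back to scale one.
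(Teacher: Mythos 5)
Your proposal takes a fundamentally different route from the paper, and it contains a gap that I believe is fatal. The paper's proof is \emph{differential-geometric}: it shows that the points $a\in A$, $b\in B$ realizing the positive distance must be regular points of $\partial\mathcal{E}$ (Lemma~\ref{lem.regular touching points}, because the geodesic ball centred at the midpoint of the minimizing geodesic gives Euclidean balls tangent to $a$ and $b$ from the complement of $\partial\mathcal{E}$), so that near $a$ and $b$ the traces $\Sigma_1$, $\Sigma_2$ on $\Sp^{N-1}$ are smooth minimal hypersurfaces-with-boundary. Then it invokes a local Frankel-type theorem (Lemma~\ref{lem.local version}, via Synge's second-variation formula): in a manifold of \emph{strictly positive Ricci curvature} the distance between two minimal hypersurfaces cannot attain a positive local interior minimum. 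This is where the hypothesis $N>2$ enters, since $\operatorname{Ric}_{\Sp^{N-1}}>0$ only for $N-1\ge 2$.

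The gap in your competitor argument is that it never actually produces an admissible cluster $\mathcal{F}$ with $P(\mathcal{F};B)<P(\mathcal{E};B)$, and in fact no such $\mathcal{F}$ can exist, precisely because $\mathcal{E}$ is assumed minimizing. The step ``filling the ball by the outer chamber is \dots strictly cheaper, since a genuinely $(N-1)$-dimensional interface is destroyed at no compensating cost along the small sphere once the radius is taken small'' is false: filling $B_r(p)$ with a single chamber destroys perimeter $\approx\Theta_p(\mathcal{E})\,\omega_{N-1}r^{N-1}$ but creates new interface of order $r^{N-1}$ along $\partial B_r(p)$; these terms scale identically in $r$, so shrinking the ball gains nothing. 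The scaling/conicality remark therefore does not ``transport a saving'' — it transports a non-saving. More tellingly, nothing in your argument uses $N>2$, yet the lemma is \emph{false} for $N=2$: the cluster $Y$ is a minimizing cone and $\partial Y\cap\Sp^1$ is three isolated points, trivially splittable into two parts at positive distance. Any correct proof must use the hypothesis $N>2$, and the only way it enters in the paper's argument is through positivity of the Ricci curvature of $\Sp^{N-1}$, a genuinely geometric input that a purely measure-theoretic excision/competitor construction does not see. The difficulty you flagged (``verifying that across the gap only one chamber is present'') is also real, but it is secondary to this more basic obstruction.
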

We will derive it as a consequence of the following two statements. 
The first one is a local version of a generalized version of Frankel's Theorem, \cite{FR}. The second one is a regularity result that enables us to apply the first one. 
\begin{lem}[local version of {\cite[Theorem 3]{PW}}]\label{lem.local version}
Let $M^n$ be a complete, connected Riemannian manifold of positive Ricci curvature. If $\Sigma_1, \Sigma_2$ are two minimal hypersurfaces inside $M^n$ possibly with boundary then the map
\[ (p,q) \in \Sigma_1 \times \Sigma_2 \mapsto \dist_{M^n}(p,q) \]
cannot have a local interior minimum. 
\end{lem}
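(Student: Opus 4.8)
The plan is to argue by contradiction following the classical Frankel-type synchronous geodesic argument, localized so that no compactness or completeness of the hypersurfaces is needed. Suppose the function $f(p,q)=\dist_{M^n}(p,q)$ has a local interior minimum at $(p_0,q_0)\in \Sigma_1\times\Sigma_2$, with $f(p_0,q_0)=\ell>0$ (the case $\ell=0$ being excluded since then $p_0=q_0$ would be an interior intersection point, and there a short normal push of one sheet strictly decreases $f$, contradicting minimality — this needs a separate short remark). Let $\gamma:[0,\ell]\to M^n$ be a minimizing unit-speed geodesic from $p_0$ to $q_0$; since the minimum is interior, $\gamma$ meets $\Sigma_1$ orthogonally at $p_0$ and $\Sigma_2$ orthogonally at $q_0$ (first variation of $f$ in the $\Sigma_i$ directions). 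Moreover $\gamma$ contains no other point of $\Sigma_1\cup\Sigma_2$ in a neighborhood, and $\ell$ is below the focal/conjugate distance, so $\gamma$ is the unique minimizer near $(p_0,q_0)$ and $f$ is smooth there.

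Next I would run the second variation. Pick an orthonormal basis $e_1,\dots,e_{n-1}$ of $T_{p_0}\Sigma_1$ (equivalently of $\dot\gamma(0)^\perp$), parallel-transport each $e_i$ along $\gamma$ to get vector fields $E_i(t)$, and use $V_i(t)=E_i(t)$ as variation fields. These are admissible because $E_i(0)\in T_{p_0}\Sigma_1$ and $E_i(\ell)\in T_{q_0}\Sigma_2$ (orthogonality of $\gamma$ to both sheets). The second-variation (index-form) computation for the energy/length of the connecting geodesic between the two submanifolds gives
\[
\sum_{i=1}^{n-1} \mathrm{Index}(V_i,V_i) \;=\; -\int_0^\ell \mathrm{Ric}(\dot\gamma,\dot\gamma)\,dt \;-\; \mathrm{II}_{\Sigma_1}^{p_0}\!\big(\cdot,\cdot\big)\text{-trace} \;-\; \mathrm{II}_{\Sigma_2}^{q_0}\!\big(\cdot,\cdot\big)\text{-trace},
\]
where the boundary terms are exactly the traces of the second fundamental forms of $\Sigma_1$ at $p_0$ and $\Sigma_2$ at $q_0$ with respect to $\pm\dot\gamma$. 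Because $\Sigma_1$ and $\Sigma_2$ are \emph{minimal}, those traces (the mean curvatures) vanish, and because $\mathrm{Ric}>0$ the remaining integral is strictly negative. Hence $\sum_i \mathrm{Index}(V_i,V_i)<0$, so some $V_i$ gives a genuine second-order decrease: there is a variation through curves joining $\Sigma_1$ to $\Sigma_2$ whose length drops below $\ell$. Taking the corresponding curve's endpoints $(p(s),q(s))\in\Sigma_1\times\Sigma_2$ produces points arbitrarily close to $(p_0,q_0)$ with $f(p(s),q(s))<\ell$, contradicting that $(p_0,q_0)$ is a local minimum of $f$.

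The step I expect to be the main obstacle is the bookkeeping of the boundary terms in the second variation when \emph{both} endpoints are allowed to move on submanifolds — one must use the full Synge-type index form with two free endpoints and correctly identify the sign of each second-fundamental-form contribution relative to the orientation of $\dot\gamma$; a sign error here would be fatal. Closely related is the need to keep everything \emph{local}: one has to verify that $\ell$ is strictly smaller than the distance to the first focal point of $\Sigma_1$ along $\gamma$ (and of $\Sigma_2$), which is what guarantees that the distance function is smooth near $(p_0,q_0)$ and that the second-variation inequality is not obstructed by a degenerate index form; this follows because at a local minimum the index form must be positive semidefinite on the relevant space, and we have just shown its trace is negative — an immediate contradiction once the focal-point transversality is in place. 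I would phrase the conclusion so that the minimality hypothesis enters only through the vanishing of the two mean curvatures, making transparent why the statement fails for merely stationary-with-respect-to-volume (CMC) hypersurfaces.
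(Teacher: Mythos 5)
Your proposal is correct and is essentially the same Synge second-variation / Frankel argument that the paper invokes; the paper's proof is just a one-line remark that \cite[Proof of Theorem 3]{PW} is already a local argument, and you have spelled out that argument (parallel-transport an orthonormal frame in $\dot\gamma^\perp$, sum the index forms, use minimality to kill the boundary second-fundamental-form traces, use $\mathrm{Ric}>0$ to make the integral strictly negative). One small correction: your aside that $\ell=0$ is excluded because ``a short normal push of one sheet strictly decreases $f$'' is not right---$f\ge 0$ cannot be decreased past $0$---the clean fix is to note that the lemma is applied only with $\Sigma_1$, $\Sigma_2$ disjoint (so $\ell>0$), and also that your worry about focal points is unnecessary, since the contradiction comes from $\dist(p(s),q(s))\le L(\gamma_s)<\ell$ and never uses smoothness of the distance function.
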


\begin{proof}
The lemma is proven in precisely the same way as the "global" version presented in \cite[Proof of Theorem 3]{PW} because it is a local argument. It is a direct consequence of Synge's second variation formula for geodesics.
\end{proof}

\begin{lem}\label{lem.regular touching points}
Let $\mathcal{E}$ be a $(\Lambda, \rho)$-minimizing cluster, and $x \in \partial \mathcal{E}$ with the property that there exists a ball $B_r(p) \subset \R^N$ with $x \in \partial B_r(p)$ and $\partial \mathcal{E} \cap B_r(p) = \emptyset$ then $x$ is a regular point.  
\end{lem}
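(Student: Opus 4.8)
We want to show that a boundary point $x$ of a minimizing cluster which is touched by an empty ball must be a regular point — that is, $x$ lies on the reduced boundary of exactly two chambers, with a smooth interface. The plan is to analyze the blow-up of $\mathcal{E}$ at $x$. First, by the monotonicity formula, the blow-up $\mathcal{E}_{x,0}$ exists and is a minimizing cone-like cluster. Since $x \in \partial B_r(p)$ with $\partial\mathcal{E}\cap B_r(p) = \emptyset$, rescaling at scale $r_k \to 0$ sends the ball $B_r(p)$ to an increasingly flat region: the rescaled empty balls $B_{r/r_k}((p-x)/r_k)$ converge to a half-space $H$ whose boundary hyperplane passes through the origin, and $H$ contains no boundary of the blow-up cluster. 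Thus the blow-up cone $\mathcal{E}_{x,0}$ has $\partial\mathcal{E}_{x,0} \cap H = \emptyset$.

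**Exploiting the half-space.** Now I would argue that a minimizing cone-like cluster whose singular-or-regular boundary avoids an open half-space through the origin must have boundary consisting of a single hyperplane (or be empty, but $x\in\partial\mathcal E$ forbids that). Indeed, since $\partial\mathcal{E}_{x,0}$ is contained in the complementary closed half-space $\R^N\setminus H$ and is a cone, and the chambers partition $\R^N$, the whole open half-space $H$ lies in a single chamber, say $\mathcal{E}_{x,0}(i)$. Then $\partial\mathcal{E}_{x,0} \subset \partial H$, a hyperplane through the origin. A cone-like cluster whose entire boundary sits inside a hyperplane through the origin, is minimizing, and is nonempty, must be exactly the two half-spaces determined by that hyperplane (any further subdivision within the hyperplane would force boundary pieces sticking out, contradicting containment; more carefully, each chamber is open and its boundary lies in $\partial H$, so each chamber is a union of the two open half-spaces, forcing exactly two chambers each equal to one open half-space). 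Hence $\Theta_x(\mathcal{E}) = \Theta_0(\mathcal{E}_{x,0}) = 1$.

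**Concluding regularity.** Having identified the blow-up as a hyperplane with multiplicity one, I invoke Allard-type regularity (equivalently, the $\varepsilon$-regularity theorem for $(\Lambda,\rho)$-minimizing clusters, e.g. via \cite{CLM} or \cite[Chapter 26--28]{Ma}): if the density $\Theta_x(\mathcal{E}) = 1$, equivalently if the cluster is sufficiently $L^1$-close at some small scale to a single hyperplane, then $\partial\mathcal{E}$ is a $C^{1,\alpha}$ hypersurface in a neighborhood of $x$, and in particular $x\notin c\text{-sing}(\mathcal{E})$ and $x$ is a regular point. The $L^1$-closeness needed as input is precisely what the Hausdorff convergence of $\partial\mathcal{E}_{x,r}$ to $\partial\mathcal{E}_{x,0} = \partial H$, combined with the monotonicity formula controlling the density drop, provides at small scales.

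**Main obstacle.** The delicate point is the rigidity step in the middle paragraph: showing that a \emph{minimizing} cone-like cluster, not merely a stationary one, whose boundary avoids an open half-space through the origin is forced to be a single hyperplane. One must rule out, for instance, a lower-dimensional boundary cone sitting inside $\partial H$ — but minimality (comparison with the competitor that fills in any such spurious interface) kills this, since a hyperplane piece contained in another hyperplane carries no perimeter savings and the competitor strictly decreases perimeter. Making the containment "$\partial\mathcal{E}_{x,0}$ avoids $H$" rigorous from "$\partial\mathcal{E}\cap B_r(p)=\emptyset$" under blow-up — i.e., commuting the empty-ball condition with the Hausdorff limit of boundaries — is routine given the convergence statements recalled in Section \ref{sec.prel}, but should be spelled out.
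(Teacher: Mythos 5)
Your overall strategy (blow up at $x$, observe that the empty ball forces $\partial\mathcal{E}_{x,0}$ into a closed half-space, conclude that the blow-up is a hyperplane, invoke $\varepsilon$-regularity) is the same as the paper's. However, there is a genuine gap at the crucial rigidity step, and the argument you give for it does not work.

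You correctly deduce that the open half-space $H$ is contained in one chamber $\mathcal{E}_{x,0}(i)$ and that all reduced boundaries therefore lie in the closed complementary half-space. But from this you conclude ``Then $\partial\mathcal{E}_{x,0}\subset\partial H$,'' which is a non sequitur: the containment of $H$ in a single chamber only gives $\partial\mathcal{E}_{x,0}\subset\overline{\R^N_+}$, not $\partial\mathcal{E}_{x,0}\subset\{t=0\}$. A priori, the cone could have genuine $(N-1)$-dimensional boundary sticking up into the open half-space $\{t>0\}$ (e.g.\ a triple junction whose three sheets all lie on one side of the hyperplane). Such configurations are in fact impossible for \emph{minimizing} cones, but this requires proof; it is not forced by the half-space containment alone. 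In your ``Main obstacle'' paragraph you identify a difficulty, but you misdiagnose it (you worry about lower-dimensional pieces inside $\partial H$, which carry no perimeter and are irrelevant) and the competitor argument you sketch does not address the actual problem of boundary in $\{t>0\}$.

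The paper closes exactly this gap with a first-variation computation: taking a vector field of the form $X=\varphi(|y|)f(t)e_N$ with $\varphi,f$ nonnegative and nonincreasing, one computes that the tangential divergence $\Div_{\mathcal K(i)}X$ is $\le 0$ on each $\partial\mathcal K(i)$ (using that the chambers are cones and the boundary lies in $\{t\ge 0\}$), while stationarity forces the integral of this divergence to vanish; choosing $\varphi,f$ suitably then forces $\partial\mathcal K(i)\subset\{t=0\}$. This is the step missing from your proposal, and it cannot be replaced by the purely topological reasoning you give.
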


Before giving their proofs let us conclude the proof of Lemma \ref{lem.connected}.
\begin{proof}[Proof of Lemma \ref{lem.connected}]
Assume by contradiction that $\dist(A,B)>0$. Since $\partial \mathcal{E}$ is closed, $A$ and $B$ are closed, hence there exist $a\in A, b \in B$ such that $\dist_{\Sp^{N-1}}(a,b)=\dist_{\Sp^{N-1}}(A,B)=d$. Let $\gamma: [0,d] \to \Sp^{N-1}$ be the length minimizing geodesic\footnote{The geodesic is unique since $\partial \mathcal{E}$ cannot consists only by two antipodal points for $N>2$, since $\mathcal{H}^{N-2}(\mathcal{E}\cap \Sp^{N-1})>0$.}  between $a,b$. Let $m:=\gamma(\frac{d}{2})$ be the midpoint on $\gamma$. The geodesic ball $\mathcal{B}:=\mathcal{B}_{\frac{d}{2}}(m)$ satisfies $\mathcal{B} \cap A = \emptyset = \mathcal{B}\cap B$ otherwise it would contradict $\dist_{\Sp^{N-1}}(A,B)=d$. 
Since $\mathcal{E}$ is cone like we conclude that $\mathcal{E} \cap C_\mathcal{B} = \emptyset$ where $C_\mathcal{B}$ is the cone over $\mathcal{B}$ i.e. $C_\mathcal{B}:=\{ \lambda y \colon y \in \mathcal{B}, \lambda \in \R^+ \}$. By construction $a, b \in \partial C_\mathcal{B}$ and $\partial C_{\mathcal{B}}$ is a smooth hypersurface outside of $0$. We conclude that there are euclidean balls $B_1, B_2 \subset C_{\mathcal{B}}$ with $a \in \partial B_1, b \in \partial B_2$. Hence we are in the situation of Lemma \ref{lem.regular touching points} at $a,  b$. This implies that there exists $\varepsilon>0$ s.t. $\Sigma_1:=A \cap B_{\varepsilon}(a), \Sigma_2:=B\cap B_{\varepsilon}(b)$ are $C^{1,\gamma}$-regular. Furthermore since $\mathcal{E}$ is a minimizing cone-like cluster we deduce that $\Sigma_i$ are minimal hypersurfaces with boundary in $\Sp^{N-1}$ for $i=1,2$. By construction the map 
\[ (p,q) \in \Sigma_1 \times \Sigma_2 \mapsto \dist_{\Sp^{N-1}}(p,q) \]
takes a local minimum in $a,b$. This contradicts Lemma \ref{lem.local version} since $\operatorname{Ric}_{\Sp^{N-1}} > 0 $ %$ = (N-2)(N-1) g_{\Sp^{N-1}}$
 for $N>2$ and proves the lemma.
\end{proof}

\begin{proof}[Proof of Lemma \ref{lem.regular touching points}]
As shown in   about the existence of tangent cone-like minimizing cluster one has the following equivalence:
\begin{itemize}
\item[(i)] $x \in \partial \mathcal{E}$ is a regular point, i.e. there exists a neighborhood of $x$ where $\partial \mathcal{E}$ is an embedded $C^{1,\gamma}$-hypersurface;
\item[(ii)] $x$ has a "halfspace" as a tangent cone, i.e. there exists a sequence $r_k \to 0$ and $i\neq j$ such that up to rotation $\frac{\mathcal{E}-x}{r_k} \to \mathcal{K}$ with $\mathcal{K}(i)=\R^N_+, \mathcal{K}(j)=\R^N_-$ and $\mathcal{K}(h)=\emptyset$ for all $ h \neq i,j$.
\end{itemize}
Once again by \cite[Theorem 4.13]{CLM} for every sequence $r_k \to 0$ there is a subsequence, still denoted by $r_k$, a cone-like minimizing cluster $\mathcal{K}$ such that 
\[ \frac{\mathcal{E}-x}{r_k} \to \mathcal{K}. \]
Up to a rotation we have $\frac{B_r(p)-x}{r_k} \to \R^N_-$ as $k \to \infty$. Hence $\partial\mathcal{K} \subset \overline{\R^N_+}$.
The Lemma follows showing the following claim:\\

\emph{Claim: }every cone-like minimizing cluster $\mathcal{K}$ with $\partial \mathcal{K}$ contained in a halfspace is a "halfspace".\\

\emph{Proof of the claim:} After reordering the components of $\mathcal{K}$ we may assume that $\R^N_- \subset \mathcal{K}(1) $ and $\mathcal{K}(i) \subset \overline{\R^N_+}$ for all $i>1$. Fix $\varphi, f \in C^1(\R, \R)$ non-negative, non increasing with $\varphi(s), f(s) = 1$ for $s\le 0$ and define the vectorfield 
\[ X(x):= \varphi(\abs{y}) f(t) e_{N}. \]
Observe that $\spt(X) \cap \overline{R^N_+} \subset B_R $ for some $R>0$. Hence if $\Phi_t$ denotes the flow generated by $\Phi$ we have that 
\[ \mathcal{K}(j) \Delta \Phi_t(\mathcal{K}(j)) \subset B_R \text{ for all } j = 1, \dotsc, m, \abs{t}<\delta. \] 

Since $\mathcal{K}$ is minimizing and the first variation of perimeter, \cite[Theorem 17.5]{Ma}, and \ref{eq:1/2perimeter} we must have 
\[ 0 = \frac{d}{dt}|_{t=0}  P(\Phi_t(\mathcal{K})) = \frac{1}{2} \sum_{i=1}^m \int_{\partial \mathcal{K}(i)} \Div_{\mathcal{K}(i)}(X) \, d\mathcal{H}^{N-1}. \]
Since $DX(x) = \varphi(\abs{y}) f'(t) e_N \otimes e_N + \varphi'(\abs{y}) \frac{f(t)}{\abs{y}} e_N \otimes y$, the boundary divergence of $X$ on $\mathcal{K}(i)$ is given by 
\begin{align*}
\Div_{\mathcal{K}(i)}(X)= \varphi(\abs{y}) f'(t) \left( 1 - \langle \nu_i, e_N \rangle^2 \right) + \varphi'(\abs{y}) \frac{f(t)}{\abs{y}} \left( \langle e_N, y\rangle - \langle \nu_i, e_N \rangle \langle y, \nu_i \rangle \right)\\
= \varphi(\abs{y}) f'(t) \left( 1 - \langle \nu_i, e_N \rangle^2 \right) + \varphi'(\abs{y}) \frac{f(t)}{\abs{y}} t \langle \nu_i, e_N\rangle^2 
\end{align*}
where we used beside $\langle e_N, y \rangle =0 $ that, since $\mathcal{K}(i)$ is an open cone, 
\[0=\langle x, \nu_i \rangle = \langle y , \nu_i\rangle + t \langle \nu_i, e_N \rangle .\]
This implies that $\Div_{\mathcal{K}(i)}(X) \le 0$ for each $i$. Choosing $\varphi$ and $f$ appropriate we deduce that $\partial \mathcal{K}(i) \subset \{ t= 0\} $ for all $i$. This concludes the proof of the claim. 
\end{proof}

\section{Proof of the lower density bound}\label{sec.ldb}
In this section we prove Theorem \ref{thm.lowdens} for minimizing cone-like $m$-clusters. Before proving our main result we show in the following lemma that, if $m\ge 2$, then the origin cannot be the only $c$-singular point.
\begin{lem}\label{lem.csingsphere}
Let $N>2$, $m\ge 2$ and let $\mathcal E$ be a minimizing cone-like cluster, then $c-\mathrm{sing}(\mathcal E)\cap \mathbb S^{N-1}\neq\emptyset$. 
\end{lem}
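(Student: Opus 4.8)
The plan is to argue by contradiction: suppose that $\mathcal E$ is a minimizing cone-like cluster with $m\ge 2$ chambers and with $c\mathrm{-sing}(\mathcal E)\cap\mathbb S^{N-1}=\emptyset$, so that the only possible $c$-singular point of $\mathcal E$ is the origin. I would first observe that, since $\mathcal E$ is a cone, the set $\Gamma:=\partial\mathcal E\cap\mathbb S^{N-1}$ is a nonempty compact subset of the sphere (nonempty because $m\ge 2$ forces at least one interface, and a cone-like interface meets $\mathbb S^{N-1}$), and by the cone structure every point of $\Gamma$ other than the origin corresponds radially to a point of $\partial\mathcal E\setminus\{0\}$. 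The absence of $c$-singular points on the sphere means, via Proposition \ref{csingpro}, that at every point $x\in\mathbb S^{N-1}$ there are at most two chambers with positive lower density; combined with the regularity theory for $(\Lambda,\rho)$-minimizing clusters (the reduced boundary is $C^{1,\alpha}$ and, away from $c$-singularities, the only admissible blow-ups are halfspaces), this should force $\Gamma$ to be a smooth embedded $C^{1,\alpha}$ (indeed analytic) hypersurface of $\mathbb S^{N-1}$ without boundary, separating two chambers.

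Next I would use the connectedness result of Section \ref{sec.connected}. Lemma \ref{lem.connected} tells us that $\Gamma$ cannot be split into two nonempty pieces at positive distance; since $\Gamma$ is a compact smooth hypersurface, its connected components are at positive distance from each other, so $\Gamma$ must in fact be connected. Thus $\Gamma$ is a connected closed embedded hypersurface in $\mathbb S^{N-1}$, and the cone $\partial\mathcal E$ over it is, away from the origin, a single smooth minimal hypersurface (constant mean curvature $0$, since it is a cone that minimizes and is scale invariant) dividing $\R^N$ into exactly two chambers. But then near the origin the cluster looks like two chambers meeting along a single smooth hypersurface, i.e. the density hypothesis degenerates: more precisely, the tangent cone at $0$ of this configuration is itself, and a minimizing cone with only two chambers whose interface is a smooth minimal cone through $0$ must be a hyperplane (this is the standard fact that a minimizing \emph{two}-chamber cone in $\R^N$ is a halfspace — equivalently $\Gamma$ would be a totally geodesic equatorial sphere). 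That would mean $\mathcal E$ has essentially only two chambers and no singularity at all, contradicting either $m\ge 2$ being a genuine cluster with three regions meeting, or — if $m=2$ exactly — contradicting the hypothesis that $0\in\partial\mathcal E$ is present as a cone vertex of a nontrivial cluster.

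I would make the last step precise as follows. From the paragraphs above, if $c\mathrm{-sing}(\mathcal E)\cap\mathbb S^{N-1}=\emptyset$ then $\partial\mathcal E\setminus\{0\}$ is a smooth minimal hypersurface, hence $\Gamma\subset\mathbb S^{N-1}$ is a smooth \emph{minimal} hypersurface of the sphere with no boundary and no singular points. Taking any point $p\in\Gamma$ and any point $q\in\Gamma$ not in the same hemisphere (or, if $\Gamma$ is connected and one invokes Frankel-type rigidity through Lemma \ref{lem.local version}), the distance function $(p,q)\mapsto\dist_{\mathbb S^{N-1}}(p,q)$ on $\Gamma\times\Gamma$ attains an interior minimum — for instance at a pair realizing the diameter-type minimal separation, or simply because a compact boundaryless minimal hypersurface of a positively curved manifold must, by the generalized Frankel theorem, be connected and in fact its self-distance function has critical behavior incompatible with Lemma \ref{lem.local version} unless $\Gamma$ is totally geodesic. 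Totally geodesic $\Gamma$ gives $\partial\mathcal E$ a hyperplane, so the cluster has only two chambers with a flat smooth interface and the origin is a regular point, not a cone vertex of a genuine $m$-cluster with $m\ge 2$ having three chambers locally; in the remaining borderline case $m=2$ we simply note that a halfspace is the blow-down as well, so $\mathcal E$ is a halfspace-cluster and again has no $c$-singularity anywhere, in particular the statement is vacuous or $\mathcal E$ was not a genuine object of our study. This contradiction proves $c\mathrm{-sing}(\mathcal E)\cap\mathbb S^{N-1}\ne\emptyset$.

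The main obstacle I anticipate is the clean extraction of the contradiction from ``$\Gamma$ is a smooth closed minimal hypersurface of $\mathbb S^{N-1}$'': one has to rule out that such a $\Gamma$ supports a nontrivial minimizing three-chamber cone with singularity only at $0$. The honest route is to combine Lemma \ref{lem.connected} (to get connectedness of $\Gamma$) with Lemma \ref{lem.local version} applied to two far-apart arcs/pieces of the \emph{same} connected hypersurface $\Gamma$ — i.e. a Frankel-type self-intersection argument — to conclude $\Gamma$ is totally geodesic, hence an equator, hence $\partial\mathcal E$ is a hyperplane and $m$ must effectively be $1$ up to the exterior chamber, contradicting $m\ge2$ together with the assumption that the cluster genuinely has a singular cone at $0$. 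Care is needed because Lemma \ref{lem.local version} is phrased for \emph{two} hypersurfaces $\Sigma_1,\Sigma_2$; to use it on a single connected $\Gamma$ one passes to two disjoint relatively open pieces whose closures still realize the minimal self-distance, exactly as in the proof of Lemma \ref{lem.connected}, and checks that the minimum is interior.
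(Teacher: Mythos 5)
Your strategy is in the same spirit as the paper's -- both proofs drive at a contradiction with Lemma~\ref{lem.connected} -- but the route you take contains a genuine gap in dimensions $N\ge 8$.

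The critical claim you make is that the absence of $c$-singular points on $\Sp^{N-1}$, together with the regularity theory, forces $\Gamma:=\partial\mathcal E\cap\Sp^{N-1}$ to be a smooth embedded closed hypersurface, because ``away from $c$-singularities the only admissible blow-ups are halfspaces.'' That is false once $N\ge 8$: a point where only two chambers accumulate can still be a genuine singularity of the two-phase interface (the Simons cone and its link furnish an area-\emph{minimizing} example). So $\Gamma$ need not be a topological manifold, Jordan--Brouwer is not available to split $\Sp^{N-1}$ into exactly two residual components, and the later assertion that ``a minimizing two-chamber cone in $\R^N$ must be a halfspace'' fails for the same reason. These three steps are the backbone of your argument, and they all rest on the incorrect smoothness claim. (For $N\le 7$ the smoothness is indeed available and your chain of implications does work.)

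The paper sidesteps this entirely: it never invokes $C^{1,\alpha}$-regularity of $\Gamma$. Instead it works with the infiltration lemma to define, for $x\in\partial\mathcal E(i)\cap\Sp^{N-1}$, a positive continuous radius $r(x)$ inside which only the two chambers of $I(x)=\{i,j(x)\}$ appear, and observes that $j(x)$ is \emph{locally constant}. Taking a connected component $K$ of $\partial\mathcal E(i)\cap\Sp^{N-1}$ and $\overline r=\min_K r>0$ produces a splitting $\partial\mathcal E\cap\Sp^{N-1}=K\cup(\partial\mathcal E\cap\Sp^{N-1}\setminus K)$ at positive distance $\ge\overline r$, with the second piece non-empty because it contains $\partial\mathcal E(l)$ for some third chamber $l\notin\{i,j\}$. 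This directly contradicts Lemma~\ref{lem.connected}, with no smoothness needed. Your proof can likely be repaired by replacing the ``smooth hypersurface $\Rightarrow$ Jordan--Brouwer $\Rightarrow$ two chambers'' passage by precisely this local-constancy argument -- in which case it becomes essentially the paper's proof -- but as written the appeal to smoothness of $\Gamma$ and to the halfspace rigidity of two-chamber cones is a gap that the Simons cone makes unfixable in high dimensions.

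A secondary remark: your ``Frankel self-intersection $\Rightarrow$ totally geodesic'' sketch is not needed once you notice that having only two non-empty chambers already contradicts the (implicit) assumption of at least three non-empty chambers; and as a general principle it is also not correct -- Frankel-type arguments give \emph{connectedness} of compact minimal hypersurfaces in positive Ricci curvature, not total geodesy, so this detour should be dropped in any case.
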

\begin{proof}
Fix a chamber $\mathcal E(i)$; for $x\in\partial\mathcal E(i)$ let $I(x)$ be the set defined in \eqref{closechambers}. Suppose that $c-\mathrm{sing}(\mathcal E)\cap \mathbb S^{N-1}\cap\partial \mathcal E(i)=\emptyset$, then $I(x)=\{i,j(x)\}$, for some $i\neq j(x)\in\{0,\ldots,m\}$.
Let 
\[
r(x)=\sup\left\{r\ge0\,:\,|B_r(x)\cap\mathcal E(l)|=0, \ \ \text{for every } l\notin I(x)\right\}
\]
Thanks to Lemma \ref{infiltrationlem} we have that $r(x)>0$, for every $x\in\partial\mathcal E(i)$.
Moreover $r$ is a Lipschitz continuous function, indeed, since $B_{r-\varepsilon}(y)\subset B_r(x)\subset B_{r+\varepsilon}(y)$, whenever $|x-y|<\varepsilon$, thus $r(x)-\varepsilon< r(y)<r(x)+\varepsilon$. 
With the same argument it is easy to check that $j(x)$ is locally constant.
\par
Let $K$ be a connected component of $\mathcal E(i)\cap\mathbb S^{N-1}$ and let $j=j(x)$, for every $x\in K$; since $K$ is a compact set, then the function $r(x)$ achieves a positive minumum $\overline r$. We get the contradiction by using Lemma \ref{lem.connected} with $A=K$ and $B=\partial\mathcal E\cap\mathbb S^{N-1}\setminus K$. Indeed $\mathrm{dist}(A,B)>\overline r$ and $B\neq \emptyset$, since $\partial \mathcal E(l)\subset B$, for every $l\notin\{i,j\}$.
\end{proof}

\begin{proof}[Proof of Theorem \ref{thm.lowdens}]
We are going to prove the statement by induction on the ambient dimension $N$.\\

If $N=2$ then the estimate follows by the fact that the boundary any minimizing cone-like cluster consist of three line segments meeting at the origin.\\

Let $N>2$, let $\mathcal E$ a cone-like minimizing cluster of $\R^N$ and suppose that our statement is valid for every cone-like minimizing cluster $\mathcal E'\subset \R^{N-1}$.\\
Thanks to Lemma \ref{lem.csingsphere} there exists a point $x\in c-\mathrm{sing}(\mathcal E)\cap\mathbb S^{N-1}$ that we can assume, up to rotating $\mathcal E$, that coincides with $e_N$.
Since $\mathcal E$ is conical, then, for $\varepsilon>0$, every point $\varepsilon e_N$ belongs to $c-\mathrm{sing}(\mathcal E)$, thus monotonicity formula \eqref{monoton} and the fact that $\mathcal E$ is left invariant under blow ups at the origin imply that
\[
\Theta_0(\mathcal E)\ge \Theta_{e_N}(\mathcal E).
\]
Indeed 
\begin{align*}
\Theta_0(\mathcal E) &=\frac{P(\mathcal E;B_1(0))}{\omega_{N-1}}=\frac{P(\mathcal E;B_1(0))}{\omega_{N-1}(1-\varepsilon)^{N-1}}+O(\varepsilon)\\
&\ge \frac{P(\mathcal E; B_{1-\varepsilon}(\varepsilon e_N))}{\omega_{N-1}(1-\varepsilon)^{N-1}}+O(\varepsilon)\ge \Theta_{e_N}(\mathcal E)+O(\varepsilon).
\end{align*}

We set $\tilde{\mathcal E}=\lim_{r\to 0}\mathcal E_{e_N,r}$; as already mentioned $\tilde{\mathcal E}$ is a cone-like minimizing cluster with $\Theta_0(\tilde{\mathcal E})=\Theta_{e_N}(\mathcal E)$, moreover it is easy to check that $0\in c-\mathrm{sing}(\tilde{\mathcal E})$.\\
Since $\mathcal E$ is conical, then $\tilde{\mathcal E}$ is invariant under translation in the $e_N$-direction, namely $\tilde{\mathcal E}=\mathcal E'\times\R$, for some (still minimizing) $m'$-cluster $\mathcal E'\subset \R^{N-1}$. Again $0\in c-\mathrm{sing}(\mathcal E')$ and thence with $m'\ge 2$.\\
The inductive assumption gives that $\Theta_0(\mathcal E')\ge 3/2$.
We can now conclude by computing the density $\Theta_0(\tilde{\mathcal E})$ in terms of $\Theta_0(\mathcal E')$. Indeed the {\it co-area} formula for $\mathcal H^{N-1}$-rectifiable sets yields
\begin{equation}\label{eq:fubini}
\begin{split}
\Theta_0(\tilde{\mathcal E})
&=\frac{P(\tilde{\mathcal E};B_1)}{\omega_{N-1}}
\ge\frac{1}{\omega_{N-1}}\int_{-1}^1 \mathcal H^{N-2} \left(\partial^*\tilde{\mathcal E}\cap B_1\cap\{x_N=t\}\right)dt\\
&=\frac{1}{\omega_{N-1}}\int_{-1}^1 \mathcal H^{N-2}\left(\partial^*\mathcal E'\cap B_{\sqrt{1-t^2}}\right)dt\\
&=\frac{1}{\omega_{N-1}}\int_{-1}^1\mathcal H^{N-2}\left(\partial^*\mathcal E'\cap B_1\right)(1-t^2)^{\frac{N-2}{2}}dt\\
&=\frac{P(\mathcal E';B_1)}{\omega_{N-1}}\int_{-1}^{1}(1-t^2)^{\frac{N-2}{2}}dt=\Theta_0(\mathcal E')\frac{\omega_{N-2}}{\omega_{N-1}}\int_{-1}^{1}(1-t^2)^{\frac{N-2}{2}}dt\\
&\ge\frac 3 2 \frac{\omega_{N-2}}{\omega_{N-1}}\int_{-1}^{1}(1-t^2)^{\frac{N-2}{2}}dt=\frac 3 2,
\end{split}
\end{equation}
where the last equation easily follows from Fubini's Theorem.

\end{proof}

\begin{remark}\label{rmk.subsequences}
{\rm In the proof above, we showed that there exist $x_k\in c\mathrm{-sing}(\mathcal E)$ and $r_k>0$ such that $\mathcal E_k=\mathcal E_{x_k,r_k}\rightarrow \mathcal E'\times \R$, such that $\mathcal E'$ is an $N-1$-dimensional minimising cone-like cluster, singular at the origin. In particular, arguing by induction on the dimension $N$, and repeating the construction of the proof of Theorem \ref{thm.lowdens}, it is possible to show that there exists a sequence of points $x_k\rightarrow 0$, $x_k\in c\mathrm{-sing}$,  and  a sequence of positive numbers $r_k\rightarrow 0$, such that, up to rotations
\[
\frac{\mathcal E-x_k}{r_k}\rightarrow Y\times \R^{N-2}.
\]
}
\end{remark}

\section{Consequences}\label{sec.consequences}
\subsection{Characterization of $Y \times \R^{N-2}$ by its density }
\begin{cor}\label{cor.characterizationbydensity}
Suppose $\mathcal{K}$ is a minimzing conelike cluster with $\Theta_0(\mathcal{K})=\frac32$ then up to rotation we have
\[ \mathcal{K}=Y \times \R^{N-2}.\]
\end{cor}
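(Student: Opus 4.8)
The plan is to run the same induction on the dimension $N$ that drives the proof of Theorem \ref{thm.lowdens}, but now keeping track of the equality cases. For $N=2$ the statement is immediate: the only minimizing cone-like cluster singular at the origin is (up to rotation) $Y$ itself, which has density exactly $3/2$, so there is nothing to characterize. So assume $N>2$ and that the claim holds in $\R^{N-1}$.

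Let $\mathcal{K}\subset\R^N$ be a minimizing cone-like cluster with $\Theta_0(\mathcal{K})=3/2$. First I would observe that $\mathcal{K}$ must have at least three chambers: if it had only two, its boundary would be a minimizing cone with a single interface, hence (by the infiltration/regularity theory, or directly since $3/2<2$ forces density below that of a hyperplane's... actually density $3/2<$ the density of any nontrivial two-chamber cone which is $\ge 1$ only if a halfspace... ) — more carefully, a two-chamber minimizing cone-like cluster is a union of halfspaces; if $\partial\mathcal{K}$ is a single hyperplane the density is $1$, and any genuinely conical two-chamber minimizer has density $>1$ but still could be less than $3/2$, so I instead rule this out by noting $0\in\partial\mathcal{K}$ together with Corollary/Proposition \ref{csingpro}: a density of exactly $3/2$ at a point of a two-chamber minimizer would force $\mathcal{K}$ to be a hyperplane (density $1$), contradiction; hence $\mathcal{K}$ has $m\ge 2$ interior chambers and, by the argument below, $0\in c\mathrm{-sing}(\mathcal{K})$. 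Then by Lemma \ref{lem.csingsphere} there is $x\in c\mathrm{-sing}(\mathcal{K})\cap\mathbb{S}^{N-1}$, which up to rotation we take to be $e_N$. Exactly as in the proof of Theorem \ref{thm.lowdens}, monotonicity gives $3/2=\Theta_0(\mathcal{K})\ge\Theta_{e_N}(\mathcal{K})\ge 3/2$, so equality holds throughout; in particular the monotone quantity \eqref{monoton} is constant between radius $0$ and the scales used, which forces $\mathcal{K}$ to be invariant under translation by $e_N$. Equivalently, the blow-up $\tilde{\mathcal{K}}=\lim_{r\to0}\mathcal{K}_{e_N,r}$ equals $\mathcal{K}$ up to this translation invariance, so $\mathcal{K}=\mathcal{E}'\times\R$ for a minimizing cone-like cluster $\mathcal{E}'\subset\R^{N-1}$ with $0\in c\mathrm{-sing}(\mathcal{E}')$.

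Next I would propagate the equality through the co-area computation \eqref{eq:fubini}. There the only inequality used (besides the inductive density bound) is
\[
P(\tilde{\mathcal{K}};B_1)\ge\int_{-1}^1\mathcal{H}^{N-2}\bigl(\partial^*\tilde{\mathcal{K}}\cap B_1\cap\{x_N=t\}\bigr)\,dt,
\]
which is an equality precisely when $\partial\tilde{\mathcal{K}}$ adds no measure along the $x_N$-direction beyond its slices — automatic here since $\tilde{\mathcal{K}}=\mathcal{E}'\times\R$ — and the inequality $\Theta_0(\mathcal{E}')\ge 3/2$ from Theorem \ref{thm.lowdens}. Since the chain of inequalities collapses to $3/2=3/2$, we must have $\Theta_0(\mathcal{E}')=3/2$. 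By the inductive hypothesis, $\mathcal{E}'=Y\times\R^{N-3}$ up to rotation in $\R^{N-1}$, and therefore $\mathcal{K}=\mathcal{E}'\times\R=Y\times\R^{N-2}$ up to rotation in $\R^N$, completing the induction.

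The main obstacle I anticipate is the rigidity step that turns $\Theta_0(\mathcal{K})=\Theta_{e_N}(\mathcal{K})$ into genuine translation invariance of $\mathcal{K}$ along $e_N$: one needs the equality case of the monotonicity formula \eqref{monoton} for $(\Lambda,\rho)$-minimizing (here $\Lambda=0$) clusters, namely that constancy of $r\mapsto P(\mathcal{E};B_r(x))/(\omega_{N-1}r^{N-1})$ on an interval forces the cluster to be a cone with vertex $x$, applied at $x=e_N$. This is standard for area-minimizing cones and transfers to clusters via \eqref{eq:1/2perimeter} and the structure of $\partial\mathcal{E}$, but it must be invoked carefully — in particular, one should check that the error term in the monotonicity formula that was only estimated as $O(\varepsilon)$ in the proof of Theorem \ref{thm.lowdens} actually vanishes in the equality case, giving the cone property at $e_N$ and hence the product structure. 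A secondary, more routine point is the initial reduction ruling out $m\le 1$ interior chambers and confirming $0\in c\mathrm{-sing}(\mathcal{K})$, which follows from Proposition \ref{csingpro} together with the observation that $\Theta_0<2$ already excludes any configuration with only a halfspace-type or single-interface structure of density $\ge 3/2$ other than the flat hyperplane, whose density $1\ne 3/2$.
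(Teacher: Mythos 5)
Your proposal follows essentially the same induction-on-dimension scheme as the paper's proof: base case $N=2$ by classification, use Lemma \ref{lem.csingsphere} to find a $c$-singular point $e_N$ on the sphere, apply Theorem \ref{thm.lowdens} to force $\Theta_{e_N}(\mathcal{K})=\Theta_0(\mathcal{K})=\tfrac32$, extract translation invariance to split off a $\R$-factor, and close by the co-area identity and the induction hypothesis. The one step you flag as an ``obstacle''---turning $\Theta_{e_N}(\mathcal{K})=\Theta_0(\mathcal{K})$ into translation invariance---is exactly where the paper is more careful: it invokes the classical facts (attributed to White) that for a cone with vertex $0$ one always has $\Theta_y(\mathcal{E})\le\Theta_0(\mathcal{E})$, and that the set $L_{\mathcal{E}}=\{y:\Theta_y(\mathcal{E})=\Theta_0(\mathcal{E})\}$ is a \emph{linear subspace} along which $\mathcal{E}$ splits as a product. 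Your phrasing ``constant between radius $0$ and the scales used'' only gives the cone property in an annulus; what is actually needed (and what the cited fact delivers) is constancy of $r\mapsto P(\mathcal{K};B_r(e_N))/(\omega_{N-1}r^{N-1})$ on all of $(0,\infty)$, using that for a cone with vertex $0$ the density at $e_N$ at scale $R\to\infty$ recovers $\Theta_0(\mathcal{K})$. With that fact in hand the rest of your argument, including the observation that the co-area inequality in \eqref{eq:fubini} is automatically an equality once $\tilde{\mathcal{K}}=\mathcal{E}'\times\R$, matches the paper line for line.
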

\begin{proof}
To show the claim we combine Proposition \ref{csingpro} with some classical consequences of the monotonicity of the density:\\
Let $\mathcal{E}$  be a any minimizing cluster then 
\begin{enumerate}
\item if $\frac{ P(\mathcal{E};B_r(y))}{w_{N-1}r^{N-1}}= \frac{ P(\mathcal{E};B_s(y))}{w_{N-1}s^{N-1}}$ for some $0\le r < s$ then $\mathcal{E}$ coincides with a cone in the annulus $B_{s}(y)\setminus B_r(y)$;
\item if $\mathcal{E}$ is a cone with vertex $0$ then $\Theta_y(\mathcal{E}) \le \Theta_0(\mathcal{E})$ for all $y$;
\item additionally one has that $L_{\mathcal{E}}:=\{ y \colon \Theta_y(\mathcal{E}) = \Theta_0(\mathcal{E})\}$ is a linear subspace of $\R^N$. Suppose $k= \operatorname{dim}(L_{\mathcal{E}})$ then there is a cone-like minimizing cluster $\mathcal{E}'$ in $\R^{N-k}$ such that $ \mathcal{E} = \mathcal{E}' \times L_{\mathcal{E}}$.
\end{enumerate}
(1) follows from the montonicity formula \cite[Theorem 28.9]{Ma}, (2) and (3) can be found for instance in a more general setting in \cite{Wh}.\\
We will show the corollary by induction on the dimension $N$. For $N=2$ the statement follows by the classification of cone-like clusters in $\R^2$, \cite[Proposition 30.9]{Ma}. Suppose the corollary is proven for dimension $N'<N$. Let $\mathcal{K}$ be a minimizing cone-like cluster in $\R^N$ with $\Theta_0(\mathcal{K})=\frac32$. By Lemma \ref{lem.csingsphere} there is $y \in c-\sing(\mathcal{K}) \cap \Sp^{N-1}$. Applying Theorem \ref{thm.lowdens} we have $\Theta_y(\mathcal{K}) \ge \frac{3}{2}$. Combining (2) and (3) above we deduce that $k:=\operatorname{dim}(L_{\mathcal{K}})\ge 1$ and the existence of cone-like minimizing cluster $\mathcal{K}'$ in $\R^{N-k}$ with  $ \mathcal{K} = \mathcal{K}' \times L_{\mathcal{K}}$. By Fubini's Theorem, compare \eqref{eq:fubini}, we have 
\[ \frac3 2\le\Theta_0(\mathcal{K}')\le\Theta_0(\mathcal{K})\le \frac3 2. \]
Hence by $\mathcal{K}'$ satisfies the conditions of the corollary and by induction hypothesis we have $\mathcal{K}' = Y \times \R^{N-k-2}$ and so $\mathcal{K}= Y \times \R^N$.
\end{proof}
\subsection{Sequential convergence of clusters}
The aim of this subsection is to proof the following:
\begin{pro}\label{prop.sequential}
Let $\mathcal{E}_k$ be a sequence of $(\Lambda_k, \rho_k)$-minimizing clusters with $\Lambda_k \to \Lambda, \rho_k \to \rho_0>0$ and $\mathcal{E}_k \to \mathcal{E}$ in some open set $U \in \R^{N-1}$ as $k \to \infty$, then the following holds:
\[ c-\sing(\mathcal{E}_k ) \to c-\sing(\mathcal{E}) \text{ in Hausdorff on every } V \Subset U. \]
\end{pro}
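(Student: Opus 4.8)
The argument splits the Hausdorff convergence into its two one-sided parts, and a routine compactness/covering argument on $\overline V$ reduces the whole statement to the two pointwise assertions below. Throughout we use that, since $\Lambda_k\to\Lambda$ and $\rho_k\to\rho_0>0$, the Infiltration Lemma \ref{infiltrationlem} holds for all large $k$ with one fixed pair of constants $\varepsilon,r_0$; that $\mathcal E$ is itself $(\Lambda,\rho_0)$-minimizing as an $L^1$ limit; that $\partial\mathcal E_k\to\partial\mathcal E$ in the local Hausdorff distance; and that $P(\mathcal E_k;\cdot)\to P(\mathcal E;\cdot)$ on all but countably many concentric balls --- all standard for sequences of $(\Lambda_k,\rho_k)$-minimizing clusters converging in $L^1_{\mathrm{loc}}$. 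The two assertions are: (a) if $x_k\in c\mathrm{-sing}(\mathcal E_k)$ and $x_k\to x$, then $x\in c\mathrm{-sing}(\mathcal E)$; and (b) for every $x\in c\mathrm{-sing}(\mathcal E)$ and every $\eta>0$ there is $K$ with $c\mathrm{-sing}(\mathcal E_k)\cap B_\eta(x)\neq\emptyset$ for all $k\ge K$. Given (a), a contradiction argument yields, for each $V\Subset U$ and $\varepsilon>0$, the inclusion $c\mathrm{-sing}(\mathcal E_k)\cap V\subset(c\mathrm{-sing}(\mathcal E))_\varepsilon$ for $k$ large; given (b), covering the compact set $c\mathrm{-sing}(\mathcal E)\cap\overline V$ by finitely many balls $B_{\varepsilon/2}(x_\ell)$ with centres in $c\mathrm{-sing}(\mathcal E)$ and applying (b) at each $x_\ell$ yields $c\mathrm{-sing}(\mathcal E)\cap V\subset(c\mathrm{-sing}(\mathcal E_k))_\varepsilon$ for $k$ large. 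Together these give Hausdorff convergence on $V$ (the case $c\mathrm{-sing}(\mathcal E)\cap\overline V=\emptyset$ being handled by (a) alone).

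Assertion (a) is the ``no escape'' part and is the easy one. By Proposition \ref{csingpro}, $x_k\in c\mathrm{-sing}(\mathcal E_k)$ means $x_k\in\partial\mathcal E_k$ together with $|\mathcal E_k(i)\cap B_r(x_k)|+|\mathcal E_k(j)\cap B_r(x_k)|\le(1-\varepsilon)\,\omega_N r^N$ for every $i\neq j$ and every $r\le r_0$. Since $\mathcal E_k\to\mathcal E$ in $L^1_{\mathrm{loc}}$ and $x_k\to x$, each of these inequalities passes to the limit, and $x\in\partial\mathcal E$ because $\partial\mathcal E_k\to\partial\mathcal E$ in local Hausdorff distance; invoking Proposition \ref{csingpro} once more gives $x\in c\mathrm{-sing}(\mathcal E)$.

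The substance is assertion (b), the ``non-removability'' statement. Fix $x\in c\mathrm{-sing}(\mathcal E)$ and $\eta>0$. Blowing up $\mathcal E$ at $x$ produces, via the monotonicity formula \eqref{monoton}, a minimizing cone-like cluster $\mathcal C$ with $0\in c\mathrm{-sing}(\mathcal C)$ and at least three chambers. Applying Remark \ref{rmk.subsequences} to $\mathcal C$ and diagonalizing against the convergence $(\mathcal E-x)/\sigma\to\mathcal C$ as $\sigma\to0$, I obtain points $p_j\to x$, scales $s_j\to0$ and rotations $R_j$ with $R_j(\mathcal E-p_j)/s_j\to Y\times\R^{N-2}$ in $L^1_{\mathrm{loc}}$. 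Let $\delta_0>0$ be the threshold provided by the non-removability lemma below. Fixing one index $j$ with $B_{2s_j}(p_j)\subset B_\eta(x)$, with the rescaled minimality constants $\Lambda s_j,\rho_0/s_j$ in the admissible range, and with $\mathrm{d}\big(R_j(\mathcal E-p_j)/s_j,\,Y\times\R^{N-2};B_1\big)<\delta_0/2$, the $L^1$ convergence $\mathcal E_k\to\mathcal E$ on $B_{2s_j}(p_j)$ gives $\mathrm{d}\big(R_j(\mathcal E_k-p_j)/s_j,\,Y\times\R^{N-2};B_1\big)<\delta_0$ for all large $k$; the lemma then produces a $c$-singular point of $R_j(\mathcal E_k-p_j)/s_j$ in $B_1$, hence a point of $c\mathrm{-sing}(\mathcal E_k)$ in $B_{s_j}(p_j)\subset B_\eta(x)$. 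This proves (b).

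It remains to establish the non-removability lemma used above: \emph{for fixed $\Lambda_0,\rho_1$ there is $\delta_0=\delta_0(N,\Lambda_0,\rho_1)>0$ such that every $(\Lambda',\rho')$-minimizing cluster $\mathcal F$ with $\Lambda'\le\Lambda_0$, $\rho'\ge\rho_1$ and $\mathrm{d}(\mathcal F,\,Y\times\R^{N-2};B_1)<\delta_0$ has $c\mathrm{-sing}(\mathcal F)\cap B_1\neq\emptyset$.} For $N=2$ this is Lemma \ref{lem.Ynonremovable}. For $N>2$ I would reduce to that case by slicing with $2$-planes transverse to the spine $\{0\}\times\R^{N-2}$: upgrading the $L^1$ closeness to Hausdorff closeness of $\partial\mathcal F$ to $Y\times\R^{N-2}$ through the uniform density estimates for almost minimizers, one shows that for $z$ in a set of positive measure near the origin the slice $\mathcal F\cap(\R^2\times\{z\})$ is a planar cluster close to $Y$, whose three chambers --- close to the three sectors of $Y$ --- must share a common boundary point by a topological tripoint argument; such a point is a $c$-singular point of $\mathcal F$, since there three chambers of $\mathcal F$ meet. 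The delicate point, and in my view the main obstacle of the whole proof, is exactly that slices of a minimizing cluster need not be minimizing, so Lemma \ref{lem.Ynonremovable} cannot be quoted verbatim on the slice and one must instead run the argument through lower semicontinuity of the perimeter of the slices together with the topological input. Once this lemma is in hand, (a), (b) and the covering argument above yield the claimed Hausdorff convergence of $c\mathrm{-sing}(\mathcal E_k)$ to $c\mathrm{-sing}(\mathcal E)$ on every $V\Subset U$.
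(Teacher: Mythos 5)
Your overall frame matches the paper's: you split the Hausdorff convergence into the easy inclusion (your assertion (a), which you handle exactly as the paper does by passing to limits in the characterization of Proposition \ref{csingpro}) and the hard inclusion (your assertion (b)), and for (b) you correctly identify the route through Remark \ref{rmk.subsequences} to rescaled clusters close to $Y\times\R^{N-2}$, followed by a non-removability statement. The uniform formulation of your non-removability lemma (with a threshold $\delta_0(N,\Lambda_0,\rho_1)$) is a perfectly reasonable packaging of what the paper proves along its chosen sequence $\mathcal E'_l$.

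The gap is that this non-removability lemma is left unproven for $N>2$, and the sketch you offer rests on a misreading of Lemma \ref{lem.Ynonremovable}. You write that ``slices of a minimizing cluster need not be minimizing, so Lemma \ref{lem.Ynonremovable} cannot be quoted verbatim on the slice'' and flag this as the main obstacle. But Lemma \ref{lem.Ynonremovable} carries no minimality hypothesis at all: it is a purely measure-theoretic and topological statement about an \emph{arbitrary} partition of $B_1\subset\R^2$ into three sets of finite perimeter whose traces on $\partial B_1$ are single intervals, concluding $\partial E_1\cap\partial E_2\cap\partial E_3\neq\emptyset$, and it is proved via the path-connectedness Lemma \ref{lem.pathconnected}, not via any variational argument. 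Precisely because it requires only finite perimeter and interval traces, the paper \emph{does} quote it verbatim on the slices. The real work is therefore not to compensate for lost minimality, but to verify the hypotheses of Lemma \ref{lem.Ynonremovable} on almost every slice: the paper uses the small-excess regularity theorem for almost-minimizers (\cite[Theorem 4.1]{CLM}) to get \eqref{eq:closetoY}, and Vol'pert slicing, to show that for a.e.\ $z$ the slice $\mathcal E'_z$ is a partition of $B_2\subset\R^2$ into three finite-perimeter sets with interval traces; then it argues by contraposition from Proposition \ref{csingpro} that absence of a $c$-singular point in $C_2$ forces $\partial\E'_z(1)\cap\partial\E'_z(2)\cap\partial\E'_z(3)=\emptyset$, contradicting Lemma \ref{lem.Ynonremovable}. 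Your ``lower semicontinuity of the perimeter of the slices'' and ``topological tripoint argument'' gesture at the right ingredients but are not carried out, and your claim that a triple point of the slice is automatically a $c$-singular point of the ambient cluster also needs the contrapositive route through Proposition \ref{csingpro} to be rigorous. In short: the skeleton of your proof is correct and parallels the paper, but the heart of (b) is missing, and the obstacle you single out is a phantom created by misreading what Lemma \ref{lem.Ynonremovable} actually asserts.
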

\begin{proof}
We will first show the easier part that for each $\varepsilon >0$ there exists $k_0>0$ such that 
\begin{equation*}\label{eq:easierdirection}
 c-\sing(\E_k) \subset (c-\sing(\E))_\varepsilon.
\end{equation*} 
Suppose the inclusion fails, then there exist a sequence $x_k \in c-\sing(\E_k)\cap U'$ with $\dist(x_k, c-\sing(\E))> \varepsilon$. Passing to an appropriate subsequence we have $x_k \to x \in \overline{U'\setminus (c-\sing{\E})_\varepsilon} $ i.e. $x \notin c-\sing(\E)$. Hence by Proposition \ref{csingpro} there exists a tuple $i\neq j$,  a radius $r>0$ and a positive number $\varepsilon$ such that 
\[ \abs{\E(i) \cap B_r(x)} + \abs{\E(j) \cap B_r(x)} > (1-\varepsilon) r^N w_N .\]
But since $\E_k \to \E$, and $B_r(x_k) \to B_r(x)$ for $k$ sufficient large we have 
\[ \abs{\E_k(i) \cap B_r(x_k)} + \abs{\E_k(j) \cap B_r(x_k)} > (1-2\varepsilon) r^N w_N. \]
But again by Proposition \ref{csingpro} this implies $x_k \notin c-\sing(\E_k)$. Which is the desired contradiction.\\

We are left to show the harder part of the proposition, namely that if $x \in c-\sing(\mathcal{E})\cap U$ then there exists a sequence $x_k \in c-\sing(\mathcal{E}_k)$ with $x_k \to x$. This will be achieved by the fact that a $Y\subset \R^2$ is essentially non removable:

\begin{lem}\label{lem.Ynonremovable}
Given a partition of $B_1\subset \R^2$ into three sets of finite perimeter $E_1,E_2,E_3$ such that $E_i \cap \partial B_1$ is a single interval for each $i=1,2,3$ then
\[ \partial E_1 \cap \partial E_2 \cap \partial E_3  \neq \emptyset. \]
\end{lem}
Before we give the proof of this statement let us show how to conclude. \\
We will use the following notation for cylindrical sets: 
\[
C_r:=B_r\times ]-r,r[^{N-2} \subset \R^2 \times \R^{N-2} \ \   \text{and}   \ \ S_\varepsilon:= B_\varepsilon \times \R^{N-2}.
\]
By the Remark \ref{rmk.subsequences}, there exists sequences $x_l \to x$ and $r_l \to 0$ such that up to rotation we have 
\[ \frac{\mathcal{E}- x_l}{r_l} \to Y \times \R^{N-2} \text{ as } l \to \infty. \]
Furthermore up to relabeling the chambers and an application of the infiltration lemma \ref{infiltrationlem} we may assume that 
\[ \frac{\mathcal{E}(i)-x_l}{r_l}\cap C_5 = \emptyset \text{ for all } i > 3, l >l_0. \]
Since $\mathcal{E}_k \to \mathcal{E}$ as $k \to \infty$ we can therefore find a  sequence $\{k(l)\}_{l \in \N}$ such that 
\[ \mathcal{E}_l':=  \frac{\mathcal{E}_{k(l)} - x_l}{r_l} \to Y\times \R^{N-2} \text{ as } l \to \infty. \]
Hence we will have that $\mathcal{E}'_l(i) \cap C_5 = \emptyset $ for all $i>3$ and $l>l_1$. So will forget about all higher $i>3$ in sequel and consider $\mathcal{E}'_l$ as a cluster with $3$ chambers.\\
Observe that $\sing(Y\times \R^{N-2}) = \{0\}\times \R^{N-2}$. The small excess regularity criterion, e.g. \cite[Theorem 4.1]{CLM}, implies that for $l>l_2$ we have 
\begin{align}\label{eq:closetoY}
\sing(\mathcal{E}'_l)\cap C_4 & \subset S_\epsilon  \nonumber \\ \partial \mathcal{E}'_l\cap C_3 \setminus S_\epsilon & \text{ is a $C^{1,\alpha}$- graph over parts of $Y\times \R^{N-2}$ }.
\end{align}
To conclude the claim it is sufficient to show what for each $l>l_2$ one has $c-\sing(\mathcal{E}_l')\cap C_2 \neq \emptyset$. To do so fix some $l>l_2$. Since the argument is independent of $l$ we drop the index and write only $\mathcal{E}'$.\\
Assume by contradiction that $c-\sing(\E')\cap C_2 = \emptyset$. 
We consider the the sliced cluster for $z \in ]-2,2[^{N-2}$
\[ \mathcal{E}'_z:= \mathcal{E}'\cap C_2 \cap \R^2 \times \{z\} \subset B_2 \subset \R^2. \]
Observe that for almost every $z$ we have 
\begin{enumerate}
\item $ \mathcal{E}'_z= \left\{ \E_z'(1), \E_z'(2), \E_z'(3) \right\} $ is a partition of $B_2\subset \R^2$ by sets of finite perimeter.
\item as a consequence of \eqref{eq:closetoY} $\partial \E'_z \setminus B_{2\epsilon}$ consists of three $C^{1,\alpha}$-curves and $\E'_z(i)\cap \partial B_1$ is a simple interval for $i=1,2,3$. 
\end{enumerate}
Fix any such $z\in ]-2,2[^{N-2}$. We assumed that $c-\sing(\E)\cap C_2 = \emptyset$. But then the Proposition \ref{csingpro} implies that for each $(y,z)\in C_2$ there exists a radius $r = r(y,z) >0$ and and index $i =  i(y,z) \in \{ 1, 2,3 \}$ such that
\[ \E'(i) \cap B_{r}((y,z)) = \emptyset,\]
which implies 
\[ \E'_z(i) \cap B_r(y) = \emptyset. \]
Observe that this equivalent to $\partial \E_z'(y) \cap B_r(y) = \emptyset$.  As a consequence we have
\[ \partial \E'_z(1) \cap \partial \E'_z(2) \cap \partial \E'_z(3) = \emptyset. \]
But this contradicts Lemma \ref{lem.Ynonremovable}, because the assumptions are satisfied by (2) above.
\end{proof}

To prove Lemma \ref{lem.Ynonremovable} we will use the following "path-connectedness" property of sets of finite perimeter in the plane, Figure 1.
\begin{figure}\label{fig.fig1}
\caption{}
  %\centering
    \includegraphics[width=0.2\textwidth]{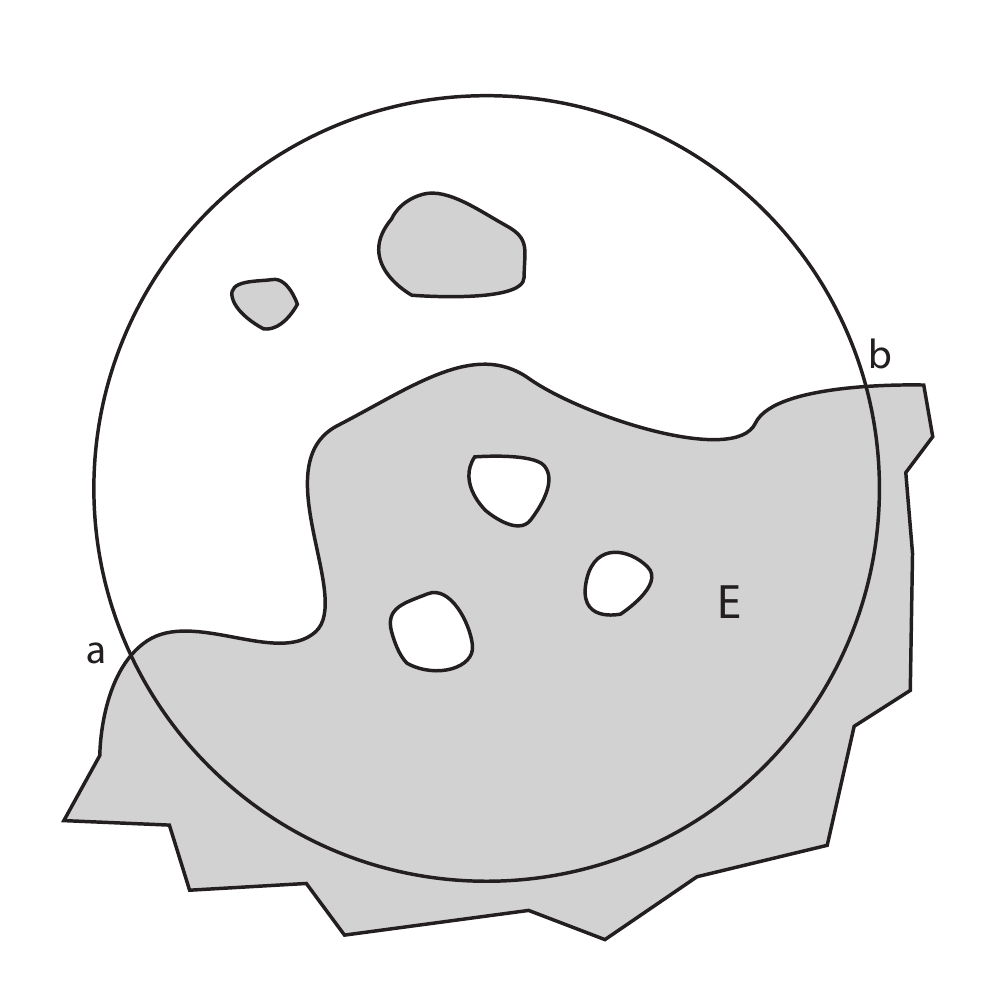}
\end{figure}
\begin{lem}\label{lem.pathconnected}
Let $E\subset \R^2$ be a set of finite perimeter s.t. $E \cap \partial B_1$ is a single interval with $\partial E \cap \partial B_1 =\{a, b\}$, then there exists a rectifiable curve $\gamma \subset B_1$ s.t. $\partial \gamma = \{a, b\}$ and $\gamma \subset \partial E$ i.e. $\{a,b\}$ are path connected inside $\partial E$.
\end{lem}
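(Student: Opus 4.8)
The statement to prove is Lemma \ref{lem.pathconnected}: for a set of finite perimeter $E \subset \R^2$ with $E \cap \partial B_1$ a single interval and $\partial E \cap \partial B_1 = \{a,b\}$, the points $a$ and $b$ are path-connected inside $\partial E$ by a rectifiable curve.

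\textbf{Plan of proof.} The idea is to exploit the one-dimensional structure theory of sets of finite perimeter in the plane, combined with a topological obstruction argument. First I would reduce to a clean representative: replace $E$ by the set of points of density one, so that $\partial E$ coincides with the essential (measure-theoretic) boundary and, by Federer's theorem, $\mathcal H^1(\partial E \cap B_1) = P(E;B_1) < \infty$. The essential boundary is then an $\mathcal H^1$-rectifiable set of finite measure. The second step is to assume for contradiction that $a$ and $b$ lie in different path-components of $\partial E \cap \overline{B_1}$ (as a compact metric space, using that $\partial E$ is closed). A compact set of finite $\mathcal H^1$-measure has at most countably many path components, each of which is itself compact; hence there is a relatively clopen subset $\Gamma \subset \partial E \cap \overline{B_1}$ containing $a$ but not $b$, and $\dist(\Gamma, (\partial E \cap \overline B_1)\setminus \Gamma) =: \delta > 0$ would fail — more carefully, compact connected components can accumulate, so I would instead argue: if no rectifiable curve in $\partial E$ joins $a$ to $b$, then by the characterization of path-connectedness for continua of finite length (a compact connected set of finite $\mathcal H^1$-measure is arcwise connected — this is a classical theorem, see e.g. Falconer or David–Semmes), $a$ and $b$ lie in distinct connected components $K_a \ni a$, $K_b \ni b$ of $\partial E \cap \overline{B_1}$.

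\textbf{The separation argument.} Having separated $a$ from $b$ inside $\partial E$, I would produce an open set $\Omega$ with $\partial E \cap \overline B_1 \cap \partial \Omega = \emptyset$, $a \in \Omega$, $b \notin \overline\Omega$, with $\partial \Omega \cap \overline B_1$ a finite union of Jordan arcs transverse to $\partial B_1$. Concretely: cover the compact set $K_a$ by finitely many small balls disjoint from $K_b$, take $\Omega$ to be a smooth thickening of this cover intersected with $B_{1+\epsilon}$. Then $\partial \Omega \cap B_1$ is a smooth $1$-manifold avoiding $\partial E$, so on each component of $\partial\Omega \cap B_1$ the set $E$ has a well-defined density (either $0$ or $1$, since $\partial^* E$ is avoided). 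Now slice: for $\mathcal H^1$-a.e. $t$, the slice of $E$ by the curve is a $1$-d set of finite perimeter. The key point: $a$ and $b$ being the only two boundary points on $\partial B_1$ means that walking along $\partial B_1$ from $a$ to $b$ one way stays in $E$ and the other way stays in $E^c$; so near $a$ the arc $\partial\Omega$ crosses from "$E$-side" to "$E^c$-side", forcing $\partial\Omega \cap \partial^* E \neq \emptyset$, i.e. $\partial\Omega$ must meet $\partial E$ — contradiction. I would make this rigorous via a mod-$2$ degree / Gauss–Green argument: $\int_{\partial\Omega \cap B_1} \text{(jump of } \mathbf 1_E) = \mathbf 1_E(a\text{-side}) - \mathbf 1_E(b\text{-side})$ type identity, where the left side is forced to vanish because $\partial\Omega$ stays in a region of constant density of $E$, while the right side is $\pm 1$ by the single-interval hypothesis.

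\textbf{Main obstacle.} The technical heart is the passage from "$a$, $b$ not joined by a rectifiable curve in $\partial E$" to a genuine topological separation by a curve avoiding $\partial E$, and then converting that separation into a contradiction with the density/trace behavior of $\mathbf 1_E$ along $\partial B_1$. The cleanest route is probably to invoke the theorem that a compact connected set of finite $\mathcal H^1$-measure is arcwise connected, and then to show directly that $a$ and $b$ must lie in the same connected component of $\partial E$, using that $B_1 \setminus \partial E = E^{(1)} \cup (E^c)^{(1)}$ splits into (relatively) open pieces and the single-interval condition forces any separation of $\partial B_1 \setminus \{a,b\}$ into "inside $E$" and "outside $E$" arcs to be "linked" by $\partial E$ — a planar Jordan-curve-type argument. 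I expect verifying this linking carefully, at the level of sets of finite perimeter rather than smooth curves, to be the step requiring the most care; the rest is standard rectifiability and slicing.
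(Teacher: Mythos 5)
There is a genuine gap at the very first step. You claim that after replacing $E$ by its set of density-one points, the topological boundary $\partial E$ coincides with the essential boundary and hence $\mathcal H^1(\partial E \cap B_1)=P(E;B_1)<\infty$. This is false: for a generic set of finite perimeter in the plane, \emph{no} choice of representative makes the topological boundary $\sigma$-finite, let alone of finite $\mathcal H^1$-measure. The best one can do is pick a representative whose topological boundary equals the support of the Gauss--Green measure, i.e.\ the closure of the reduced boundary; but this closure can be much larger than $\partial^*E$ (take, say, a countable union of tiny disjoint disks with summable perimeters whose centres are dense in a square: the topological boundary of any representative contains a set of positive Lebesgue measure). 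Federer's theorem controls $\mathcal H^1(\partial^e E)$, not $\mathcal H^1(\partial E)$, and $\partial^e E$ is not closed. Since the arcwise-connectedness theorem you invoke (continua of finite $\mathcal H^1$-measure are arcwise connected, Go\l ab/Eilenberg--Harrold type) requires finite measure, the reduction ``same component $\Rightarrow$ rectifiable arc'' does not apply as stated, and the whole dichotomy collapses. Note that the lemma nevertheless remains true in this generality: the curve it promises need only be \emph{contained} in $\partial E$; its length need not be comparable to $\mathcal H^1(\partial E)$.

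The paper's route sidesteps this entirely. It first extends $E$ conically outside $B_1$, mollifies $\mathbf 1_E$, and for a generic level $t$ considers the smooth level sets $E_n^t=\{u_n>t\}$. By coarea and convergence of perimeters, $\partial E_n^t\cap B_2$ is a smooth $1$-manifold of length controlled by $P(E;B_2)$ (not by $\mathcal H^1(\partial E)$), and by the classification of compact $1$-manifolds it contains exactly one arc $\gamma_n$ joining the two boundary points on $\partial B_2$. Compactness of curves of uniformly bounded length gives a limiting rectifiable arc $\gamma$, and a simple argument (if $x\notin\partial E$ then $u_n\equiv 0$ or $\equiv 1$ near $x$, so $\gamma_n$ avoids a fixed ball around $x$) shows $\gamma\subset\partial E$. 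This is constructive, avoids any finiteness assumption on $\mathcal H^1(\partial E)$, and requires no separation/degree argument. By contrast, even granting $\mathcal H^1(\partial E)<\infty$, your second step (separating $a$ from $b$ by a curve avoiding $\partial E$ and deriving a contradiction via a mod-$2$/Gauss--Green argument) is only sketched, and you yourself flag it as the part needing the most care; this is where the real content of a Jordan-curve-type argument for sets of finite perimeter would have to go. If you want to repair your approach along its own lines, the machinery on connected components of planar sets of finite perimeter (Ambrosio--Caselles--Masnou--Morel) would likely do the job, but that is considerably heavier than the mollification argument in the paper.
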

\begin{proof} The proof is based on an approximation by smooths sets and the classification of compact 1-manifolds. \\
Passing to 
\[ \mathbf{1}_{\hat{E}}(x) = \begin{cases} \mathbf{1}_E(x) & \text{ for } \abs{x} \le 1 \\ \mathbf{1}_E(\frac{x}{\abs{x}}) & \text{ for } \abs{x} > 1 \end{cases} \]
we may assume that $E$ is a cone outside of $B_1$ i.e. $\partial E \cap \partial B_R = \{ Ra , Rb \}$ for all $R\ge 1$. \\
Given $\varepsilon>0, \epsilon_n \to 0$ and $t \in ]0,1[$ we set for an symmetric mollifier $\rho_\varepsilon$
\[ u_\varepsilon= \rho_\varepsilon \star \mathbf{1}_E \quad u_n= u_{\varepsilon_n} \quad E_n^t=\{ u_n > t \}. \]
As shown in \cite[Theorem 13.8]{Ma} we have that for a.e. $t \in ]0,1[$ we have
\begin{enumerate}
\item $t$ is a regular value for $u_n$;
\item $E_n^t \xrightarrow{loc} E$, $P(E_n^t, B_2) \to P(E, B_2)$;
\item $\partial E_n^t \subset (\partial E)_{\varepsilon_n}$.
\end{enumerate}
Appealing once more to the Morse-Sard theorem we may assume that 

\begin{enumerate}\setcounter{enumi}{3}
\item $t$ is a regular value for $u_n|_{\partial B_2}: \partial B_2 \to [0,1]$.
\end{enumerate}

We claim that for $n> n_1$ we have
\[\partial E_n^t \cap \partial B_2 =\{ 2a_n, 2b_n\} \quad a_n \to a, b_n \to b.\]
This can be seen as follows. Let $S$ be the set $\{ \lambda x \colon x \in E\cap \partial B_1, \lambda >0 \}$ then for $n$ large we have 
\[ u_n(x) = \rho_{\epsilon_n} \star \mathbf{1}_S(x), x \in \partial B_2. \]
The claim easily holds true for $\rho_{\varepsilon_n} \star \mathbf{1}_S(x)$. \\
Fix $0< t <1$ such that (1) to (4) above are satisfied for all $n \in \N$. By the classification of compact 1-dimensional manifolds, \cite[Appendix]{Mi}, $\partial E_n^t \cap B_2 = \{u_n =t\}\cap B_2$ is a finite disjoint union of circles and segments. Since $\partial E_n^t \cap \partial B_2 = \{2a_n, 2b_n\}$ there is precisely one segment $\gamma_n$. This segment satisfies 
\[ \partial \gamma_n =\{ a_n, b_n \} \quad \operatorname{Length}(\gamma_n) \le P(\partial E_n^t, B_2). \]
Since $P(\partial E_n^t, B_2) \le 2 P(\partial E, B_2)$ by (2) for sufficient large $n$ and compactness of rectifiable curves there is a subsequence satisfying  
\[ \gamma_n \to \gamma \quad \partial \gamma_n \to \{ 2a, 2b\}.\]
It remains to be proven that $\gamma \subset \partial E$. Assume by contradiction that there is $x \in \gamma \setminus \partial E$ then there exist $r>0$ s.t. for $n$ large with $\epsilon_n< \frac{r}{2}$
\begin{align*}
\text{either } \abs{ B_r(x)\cap E} &= 0 \quad u_n(y) = 0 \quad \forall y \in B_{\frac{r}{2}}(x) \\
\text{or } \abs{ B_r(x)\setminus E} &= 0 \quad u_n(y) = 1 \quad \forall y \in B_{\frac{r}{2}}(x).
\end{align*}
In both cases we have $\gamma_n \cap B_{\frac{r}{2}}(x) \subset \{ u_n =t \} \cap  B_{\frac{r}{2}}(x)= \emptyset $, since $0<t<1$. But this contradicts $x \in \gamma$. 
The lemma now follows since $\partial E\setminus \overline{B_1 } =\{ \lambda a \colon \lambda > 1 \} \cup \{ \lambda b \colon \lambda > 1 \}$.
\end{proof}
Now we are able to obtain Lemma \ref{lem.Ynonremovable} as a corollary.
\begin{proof}[Proof of \ref{lem.Ynonremovable}]
Given the sets $E_1,E_2,E_3$; as in Figure 2 we set
\begin{figure}\label{fig.fig2}
\caption{}
  %\centering
    \includegraphics[width=0.2\textwidth]{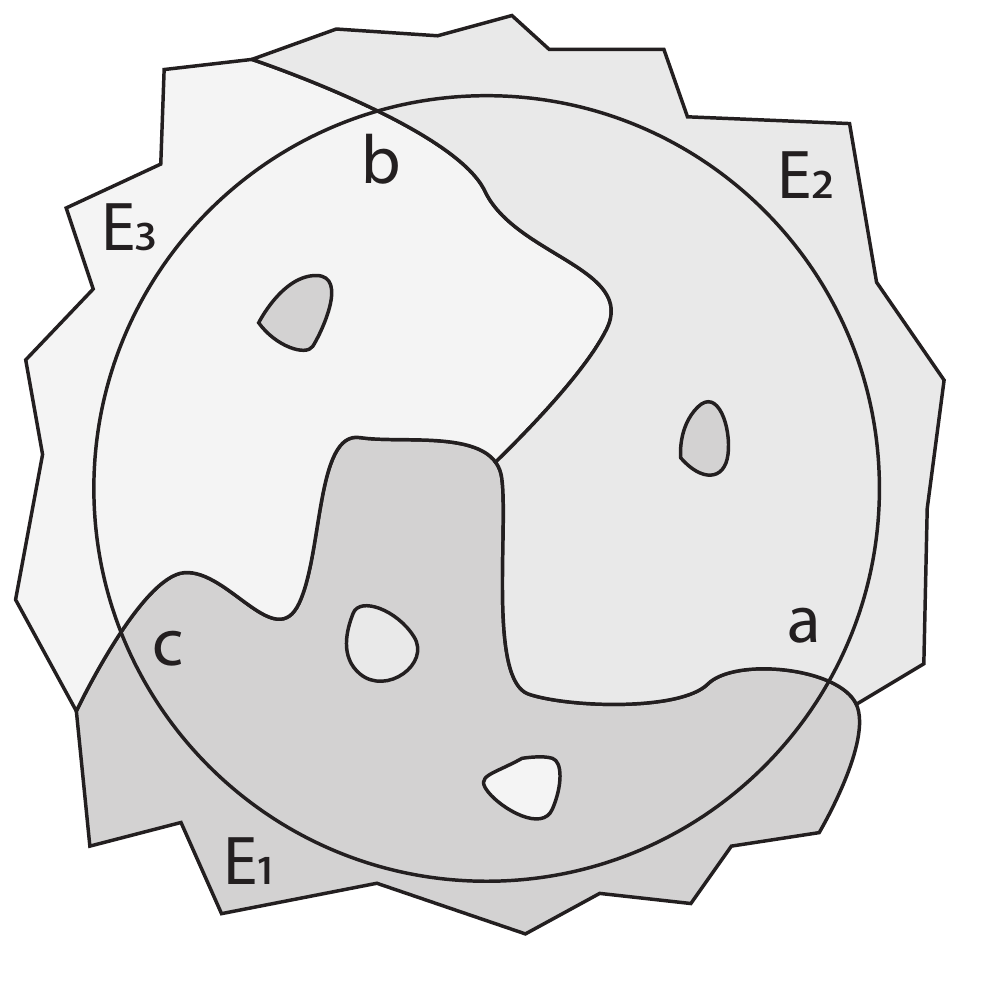}
\end{figure}

\[ a= \partial E_1 \cap \partial E_2\quad  b= \partial E_2 \cap \partial E_3 \quad c= \partial E_1 \cap \partial E_3. \]
We apply lemma \ref{lem.pathconnected} to the set $E_1$ and obtain the existence of a rectifiable curve 
\[ \gamma: [0,1] \to B_1 \text{ with } \gamma \subset \partial E_1, \gamma(0)=a, \gamma(1)=c. \]
By contradiction assume that 
\[ \partial E_1 \cap \partial E_2 \cap \partial E_3  = \emptyset. \]
This implies that
\[ \# I(x) \le 2 \text{ for all } x \in B_1 \]
where 
\[ I(x) = \left\{ i \in \{ 1, 2, 3 \} \colon 0 < \abs{ E_i \cap B_r(x)} < \pi r^2 \; \forall r>0 \right\}. \]
It is straight forward that $I(x)$ is locally constant i.e. the map $t \mapsto I(\gamma(t))$ must be constant. But this is a contradiction since $I(\gamma(0))=\{1,2\}$ and $I(\gamma(1))=\{1,3\}$. Hence the lemma is proven.
\end{proof}

By applying Proposition \ref{prop.sequential} with $\mathcal E_k=\mathcal E$ (and thus $\Lambda_k=\Lambda$ and $\rho_k=\rho$), have the following
\begin{cor}
Let $\mathcal E$ be a $(\Lambda,\rho)$-minimizing cluster, then $c\mathrm{-sing}(\mathcal E)$ is a closed subset of $\partial\mathcal E$.
\end{cor}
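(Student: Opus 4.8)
The plan is to prove that $c\mathrm{-sing}(\mathcal E)$ is sequentially closed: given a sequence $x_n\in c\mathrm{-sing}(\mathcal E)$ with $x_n\to x$, I will show that $x\in c\mathrm{-sing}(\mathcal E)$. Since by definition $c\mathrm{-sing}(\mathcal E)\subseteq\partial\mathcal E$ and $\partial\mathcal E$ is closed, this is all that is required (and it gives that $c\mathrm{-sing}(\mathcal E)$ is closed in $\R^N$, not merely in $\partial\mathcal E$). This is exactly the argument used for the \emph{easier inclusion} in the proof of Proposition \ref{prop.sequential}, now run on the constant sequence $\mathcal E_k\equiv\mathcal E$ (so that $\Lambda_k\equiv\Lambda$, $\rho_k\equiv\rho$ and $\mathcal E_k\to\mathcal E$ trivially); I would simply specialize that argument to this degenerate case.

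First, $x\in\partial\mathcal E$, because $\partial\mathcal E$ is closed and $x_n\in c\mathrm{-sing}(\mathcal E)\subseteq\partial\mathcal E$. Let $\varepsilon$ and $r_0$ be the constants of Lemma \ref{infiltrationlem}; these depend only on $\Lambda$, $\rho$ and $N$, in particular not on the base point, so one fixed pair $(\varepsilon,r_0)$ serves all the $x_n$ and $x$ simultaneously. By Proposition \ref{csingpro}, to conclude $x\in c\mathrm{-sing}(\mathcal E)$ it suffices to check that
\[
|\mathcal E(i)\cap B_r(x)|+|\mathcal E(j)\cap B_r(x)|\le(1-\varepsilon)r^N\omega_N
\]
for every $i\neq j$ and every $r\le r_0$. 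Fix such $i,j,r$. Since each $x_n$ lies in $c\mathrm{-sing}(\mathcal E)$, Proposition \ref{csingpro} gives the same inequality with $x_n$ in place of $x$. Letting $n\to\infty$ and using $|B_r(x_n)\Delta B_r(x)|\to 0$ — whence $|\mathcal E(l)\cap B_r(x_n)|\to|\mathcal E(l)\cap B_r(x)|$ for $l=i,j$, by dominated convergence — the inequality passes to the limit. Hence $x\in c\mathrm{-sing}(\mathcal E)$.

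I do not expect any genuine obstacle. The only point that deserves a moment of attention is exactly the one just flagged: the threshold $\varepsilon$ and the scale $r_0$ in the characterization of Proposition \ref{csingpro} are uniform in the base point, which is what legitimizes passing to the limit. It is also worth noting that for a constant sequence the \emph{statement} of Proposition \ref{prop.sequential} is vacuous, so the corollary is obtained by specializing the mechanism of its proof rather than by quoting it as a black box; moreover only the volume-theoretic "easier" half of that proof is needed, while the delicate non-removability-of-$Y$ half (Lemma \ref{lem.Ynonremovable}) plays no role here.
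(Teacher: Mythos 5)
Your argument is correct and is exactly the mechanism the paper invokes: the corollary is presented as an application of Proposition~\ref{prop.sequential}, and what you have done is spell out the ``easier direction'' half of that proposition's proof, specialized to the constant sequence $\mathcal E_k\equiv\mathcal E$, together with the continuity of $y\mapsto|\mathcal E(l)\cap B_r(y)|$ that lets the characterization of Proposition~\ref{csingpro} pass to the limit. Your remark that the \emph{statement} of Proposition~\ref{prop.sequential} is literally vacuous for a constant sequence, so that it is the mechanism of its proof and not the proposition as a black box that yields closedness, is accurate and shows you read the reference with the right level of scrutiny.
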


\ack
The work of the authors is supported by the MIUR SIR-grant {\it``Geometric
Variational Problems''} (RBSI14RVEZ).

\bibliographystyle{plain}
\bibliography{HirMar.bib}

\end{document}